\documentclass[11pt,a4paper]{amsart}
\usepackage{amsmath,amsfonts,amsthm,amsopn,color,amssymb,enumitem}
\usepackage[a4paper]{geometry}
\geometry{top=4.0cm, bottom=4cm, left=2.5cm, right=2.5cm}
\usepackage{palatino}
\usepackage{graphicx}
\usepackage[colorlinks=true]{hyperref}
\hypersetup{urlcolor=blue, citecolor=red, linkcolor=blue}

\usepackage{cite}
\usepackage{relsize}
\usepackage{esint}
\usepackage{verbatim}
\usepackage{mathrsfs}
\usepackage{xcolor}
\usepackage{tikz}

\newcommand{\e}{\varepsilon}

\newcommand{\ee}{{\mathtt e}}

\renewcommand{\d }{\delta }

\renewcommand{\O}{\mathcal{O}}

\newcommand{\Ua}{{\mathcal{U}}}

\newcommand{\Uj}{{\mathcal{U}_{\delta_j, \xi_j}}}
\newcommand{\Ui}{{\mathcal{U}_{\delta_i, \xi_i}}}
\newcommand{\Ul}{{\mathcal{U}_{\delta_l, \xi_l}}}
\newcommand{\UM}{{\mathcal{U}_{\delta_m, \xi_m}}}
\newcommand{\U}{{\mathcal{U}_{\delta, \xi}}}

\newcommand{\Uh}{{\mathcal{U}_{\delta_h, \xi_h}}}

\newcommand{\beq}{\begin{equation}}
\newcommand{\eeq}{\end{equation}}

\newtheorem{theorem}{Theorem}[section]
\newtheorem*{theorem*}{Theorem}
\newtheorem{lemma}[theorem]{Lemma}

\newtheorem{definition}[theorem]{Definition}
\newtheorem{proposition}[theorem]{Proposition}

\theoremstyle{definition}

\renewcommand{\(}{\left(}
\renewcommand{\)}{\right)}

\begin{document}

\title[Multi-bubble solutions for the Brezis-Nirenberg problem in  four dimensions]{Multi-bubble solutions for the Brezis-Nirenberg problem in  four dimensions}

\author[A. Pistoia]{Angela Pistoia}
\address{\noindent  Dipartimento di Scienze di Base e Applicate per l'Ingegneria, Università degli Studi di Roma Sapienza}
\email{angela.pistoia@uniroma1.it}

\author[G. M. Rago]{Giuseppe Mario Rago}
\address{\noindent  Dipartimento di Matematica, Universit\'a degli Studi di Bari Aldo Moro,Italy }
\email{g.rago6@phd.uniba.it}

\author[G. Vaira]{Giusi Vaira}
\address{\noindent  Dipartimento di Matematica, Universit\'a degli Studi di Bari Aldo Moro,Italy }
\email{giusi.vaira@uniba.it}
\thanks{Work partially supported by 
the MUR-PRIN-P2022YFAJH ``Linear and Nonlinear PDE’s: New directions and Applications" and by the INdAM-GNAMPA project ``Fenomeni non lineari: problemi locali e non locali e loro applicazioni" CUP E5324001950001.}

\subjclass{35B44, 35B33, 35J25}
\keywords{Brezis-Nirenberg problem; blowing-up solutions; Lyapunov-Schmidt reduction.}

\maketitle
\begin{abstract}
The paper addresses the existence of multi-bubble solutions for the well-known Brezis–Nirenberg problem.
Although there is extensive literature on the subject, the existence of solutions that blow up at multiple points in a 4D bounded domain remains an open problem.
The goal of the present paper is to resolve this longstanding issue. In particular, we exhibit examples of domains where a large number of multi-bubble solutions exist.
Our result can also be seen as the counterpart of the asymptotic analysis carried out by König and Laurin in
{\em Ann. Inst. H. Poincar\'e C Anal. Non Lin\'eaire, 2024}.
\end{abstract}
\section{Introduction and main results}
\noindent Let us consider the well-known 
Brezis-Nirenberg problem, i.e.   
\begin{equation}\label{pb}
\begin{cases}
- \Delta u = |u|^{2^*-2}u + \varepsilon u \ \ \ \ \ \ \ \ \ \ \ \ \ \  & \mbox{in}\ \ \Omega, \\
u= 0 \ \ \ \ \ \ \ \ \ \ \ \ \ \ \ \ & \mbox{on} \ \ \partial \Omega \ \\ 
\end{cases}
\end{equation} \\ 
where $\Omega$ is a bounded regular domain in $\mathbb{R}^{N}$ with $N\geq 3$, $\varepsilon$ is a positive parameter and $2^*:=\frac{2N}{N-2}$ is the critical exponent for the Sobolev embedding. \\
Problem \eqref{pb} has been  introduced by Brezis and Nirenberg in their celebrated paper \cite{BN}.\\
A particular feature of \eqref{pb}, due to the critical behaviour of the nonlinearity, is the possible existence of solutions which blow-up at one or more points in the domain as the parameter $\e$ approaches $0$ in dimension $N\geq 4$ or the parameter $\e$ approaches a positive number $\lambda_*$ in dimension $N=3$. The description
of the profile of the positive blowing-up solutions has been the subject of a wide
literature. Starting from the pioneering paper by Brezis and Peletier \cite{BP} where
the authors consider the radial case, a lot of results have been obtained concerning the asymptotic profiles of solutions to \eqref{pb}. Firstly, Han \cite{H} and Rey \cite{R} studied the profile of one-peak solutions in a
general domain. Most recently, a fine multi-bubbles analysis has been object of two papers by K\"onig and Laurin. Indeed, in \cite{KL1} the authors give an accurate description of the
blow-up profile of a solution with multiple blow-up points when $N\geq 4$ while in \cite{KL2} they consider the case $N=3$. The  3-dimensional case was firstly faced by Druet \cite{D}. \\ In particular, the asymptotic analysis of blowing-up solutions ensures that the
blow-up points are nothing but the critical points of a suitable function which
involves the Green’s function of $-\Delta$ in $\Omega$  with Dirichlet boundary
condition and the Robin’s function.
While the behavior of blowing-up positive solutions is by now clear, the profile
of the sign-changing solutions is far being completely understood. As far as we
know the complete scenario of sign-changing solutions has been obtained only in
the radial case by Esposito, Ghoussoub, Pistoia and Vaira in \cite{EGPV} in dimensions $N\geq 7$
and by Amadori, Gladiali, Grossi, Pistoia and Vaira \cite{AGGPV} in lower dimensions.\\

Another interesting field of research is to find a   solution to \eqref{pb} which blows up at one or more points whose profile is the one predicted by the asymptotic analysis.\\ The first result is due to Rey that in \cite{R} builds a solution that blows-up at one non-degenerate critical point of the Robin function when $N\geq 5$. In \cite{MP}, Musso and Pistoia construct positive solutions with many blow-up points when $N\geq 5$ while Musso and Salazar \cite{MS} consider the case of multiple blow-up points in dimension $N=3$. Sign-changing solutions have been built by Iacopetti and Vaira  \cite{IV1, IV2}, Liu, Vaira, Wei and Wu \cite{LVWW}, Morabito, Pistoia and Vaira in \cite{MPV}, Pistoia and Vaira in \cite{PV,PV1} and Premoselli in \cite{P}.
\\

Although there is a huge literature on \eqref{pb}, many problems are still open. 
Among others, one is concerning the existence of positive blowing-up  solutions  in dimension $N=4$. 
Remarkably, differently from   solutions with only one blow-up point  (see for example Pistoia and Rocci \cite{PR}) the  case with several blow-up points has not been studied
 in the literature, yet. The goal of the present paper is to close
 this gap. \\

 Let us introduce the objects necessary to state our result.
 Let $G(x, y)$ be the Green function of the Laplace operator with Dirichlet boundary conditions and $H$ be its regular part, i.e. 
 $$G(x, y):=\frac{1}{2\omega}\frac{1}{|x-y|^{2}}-H(x, y)$$ where $\omega:=2\pi^2$ denotes the surface area of the unit sphere in $\mathbb R^4$ and 
$H(x, y)$, $x, y\in\Omega$, satisfies $$-\Delta H(x, y)=0\ \hbox{in}\ \Omega,\ H(x, y)=\frac{1}{|x-y|^{2}}\ \hbox{on}\  \partial\Omega.$$  
For every $x\in\Omega$ the leading term of the regular part of the Green's function $$\tau_\Omega(x)=H(x,x)$$ is called the {\it Robin function}  of $\Omega$ at the point $x$.\\
Next, given an integer $k\geq 1$ we define
$$\Omega^*_k:=\left\{\boldsymbol\xi:=(\xi_1, \ldots, \xi_k)\in \Omega^k\,\,:\,\, \xi_i\neq \xi_j \,\, \mbox{for}\,\, i\neq j\right\}.$$
and for any $\boldsymbol\xi\in\Omega^*_k$ we define the symmetric matrix $M(\boldsymbol\xi)=\(m_{ij}(\boldsymbol\xi)\)_{1\leq i, j\leq k}$ whose components are given by 
\begin{equation}\label{Mxi}m_{ij}(\boldsymbol\xi):=\left\{\begin{aligned}&\tau_\Omega(\xi_i)\quad \ \ \ \ &\mbox{if}\,\, i=j\\
&-G(\xi_i, \xi_j)\quad &\mbox{if}\,\, i\neq j.\\
\end{aligned}\right.\end{equation}
We denote by $\Lambda_1( \boldsymbol\xi)$ the smallest eigenvalue of $M(\boldsymbol\xi)$.
Using the Perron-Frobenius theorem it is easy to show (see \cite{BLR}) that the eigenvalue $\Lambda_1(\boldsymbol\xi)$ is simple and the corresponding eigenvector can be chosen to
have strictly positive components. We denote by $\boldsymbol\ee(\boldsymbol\xi):=(\ee_1(\boldsymbol\xi), \ldots, \ee_4(\boldsymbol\xi))^T\in\mathbb R^4$ the unique vector such that
\begin{equation}\label{impM}M(\boldsymbol\xi)\boldsymbol\ee(\boldsymbol\xi)=\Lambda_1(\boldsymbol\xi)\boldsymbol\ee(\boldsymbol\xi)\ \hbox{and}\  \boldsymbol\ee_1(\boldsymbol\xi)=1.\end{equation}
\vskip0.2in
Finally, we remind the definition of {\em stable critical set} introduced by Y.Y. Li in \cite{yyl}. 
\begin{definition}\label{yy1}
Given a smooth function $f:D\subset\mathbb R^n\to\mathbb R,$ a set $\mathscr K$ of critical point of $f$ is    stable if there exists a neighbourhood $\Theta$ of $\mathscr K$ such that the Brouwer degree  $\mathtt{deg}(\nabla f,\Theta,0)\not=0,$ where 
$\mathtt{deg}$ denotes  the Brouwer degree. \end{definition}
Examples of stable critical sets are listed below.
\begin{itemize}
\item  $\mathscr K$ is a strict local minimum (or maximum) set of $f,$ i.e. $f(x)=f(y)$ for any $x,y\in \mathscr K$ and 
$f(x)< f(y)$ (or $f(x)> f(y)$) for any $x\in \mathscr K$ and $y\in \Theta\setminus\mathscr K$
\item $\mathscr K=\{x_0\}$ is an isolated critical point of $f$ with $\mathtt{deg}(\nabla f,B(x_0,\rho),0)\not=0$  for some small $\rho>0$ (e.g. $x_0$ is a non-degenerate critical point of $f$).
\end{itemize}

Our result is the following.
\begin{theorem}\label{main}
Let $\mathscr K$ be a  stable critical set of $\Lambda_1$ with $\Lambda_1(\boldsymbol\xi)>0$ for any $\boldsymbol \xi\in \mathscr K.$ Then, there exists a family of  solutions of 
\begin{equation}\label{pb4}-\Delta u=u^3+\e  u\ \hbox{in}\ \Omega, \  u>0\ \hbox{in}\ \Omega,\  u=0\ \hbox{on}\ \partial\Omega
\end{equation} which blows-up  at the points $\xi_{1_0}, \ldots, \xi_{k_0}$ with rates of concentration $\delta_{1_\e},\ldots,\delta_{k_\e}$ such that $\boldsymbol\xi_0:=(\xi_{1_0}, \ldots, \xi_{k_0})\in \mathscr K$ and
\begin{equation}\label{k1}
\e \log\delta_{i, \e}^{-1}\to \Lambda_1(\boldsymbol\xi_0),\quad\mbox{as}\,\,\ \e\to0.\end{equation}\end{theorem}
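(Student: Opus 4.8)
The plan is to run a Lyapunov–Schmidt finite-dimensional reduction; the feature specific to dimension four is that the concentration parameters $\delta_i$ are forced to be exponentially small in $\e$, because the bubble interactions and the linear perturbation $\e u$ balance only when $\e\log\delta_i^{-1}=O(1)$, and this balance is precisely what produces \eqref{k1}. Let $\mathcal U_{\delta,\xi}(x)=\alpha_4\,\delta\,(\delta^{2}+|x-\xi|^{2})^{-1}$, with $\alpha_4$ fixed so that $-\Delta\mathcal U_{\delta,\xi}=\mathcal U_{\delta,\xi}^{3}$ in $\mathbb R^{4}$, and let $P\mathcal U_{\delta,\xi}\in H_0^1(\Omega)$ solve $-\Delta P\mathcal U_{\delta,\xi}=\mathcal U_{\delta,\xi}^{3}$ in $\Omega$, so that $P\mathcal U_{\delta,\xi}-\mathcal U_{\delta,\xi}$ equals, to leading order, $-c\,\delta\,H(\cdot,\xi)$ with $c>0$. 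Fix a bounded neighbourhood $\Theta$ of $\mathscr K$ with $\overline\Theta\subset\Omega^*_k$, $\Lambda_1>0$ on $\overline\Theta$, and $\nabla\Lambda_1\neq0$ on $\partial\Theta$; for $\boldsymbol\xi\in\Theta$ and $\boldsymbol\delta=(\delta_1,\dots,\delta_k)$ in the region $\mathcal A_\e$ where the $\delta_i$ are mutually comparable and $\e\log\delta_i^{-1}$ stays in a fixed compact subinterval of $(0,\infty)$, look for a solution of \eqref{pb4} of the form $u=W_{\boldsymbol\delta,\boldsymbol\xi}+\phi$, where $W_{\boldsymbol\delta,\boldsymbol\xi}=\sum_{i=1}^{k}P\mathcal U_{\delta_i,\xi_i}$ and $\phi$ is orthogonal in $H_0^1(\Omega)$ to the approximate kernel $K_{\boldsymbol\delta,\boldsymbol\xi}=\mathrm{span}\{\partial_{\delta_i}P\mathcal U_{\delta_i,\xi_i},\,\partial_{(\xi_i)_a}P\mathcal U_{\delta_i,\xi_i}:1\leq i\leq k,\ 1\leq a\leq4\}$, which has dimension $5k$, matching the number of parameters $(\boldsymbol\delta,\boldsymbol\xi)$. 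It is convenient to replace $u^{3}$ by $u_+^{3}$: for $\e<\lambda_1(\Omega)$, testing $-\Delta u=u_+^{3}+\e u$ with the negative part $u_-$ gives $\int_\Omega|\nabla u_-|^{2}=\e\int_\Omega u_-^{2}$, hence $u_-\equiv0$, so $u\geq0$ and then $u>0$ by the strong maximum principle; the change is thus harmless and yields positivity a posteriori.

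For the linear theory one works in weighted $L^{\infty}$ norms adapted to $W_{\boldsymbol\delta,\boldsymbol\xi}$, estimates the error $R:=-\Delta W-W_+^{3}-\e W$ — whose size is governed by the projection terms and the bubble–bubble interactions together with the term $\e W$, the last being the dominant contribution, of order $\e\max_i\delta_i$ in the appropriate dual norm — and proves that the linearised operator $L_\e\phi:=-\Delta\phi-3W^{2}\phi-\e\phi$, restricted to $K_{\boldsymbol\delta,\boldsymbol\xi}^{\perp}$, is invertible with inverse bounded uniformly in $\e$ and in the parameters. This is by now standard: the bounded solutions in $\mathbb R^{4}$ of $-\Delta Z=3\mathcal U^{2}Z$ are exactly the linear combinations of $\partial_\delta\mathcal U,\partial_{\xi_a}\mathcal U$ (non-degeneracy of the Aubin–Talenti bubble), and a localisation/blow-up contradiction argument, relying on the uniform separation of the bubbles ($\delta_i/\delta_j$ bounded, $\delta_i=o(\dist(\xi_i,\xi_j))$ and $\delta_i=o(\dist(\xi_i,\partial\Omega))$ on $\mathcal A_\e\times\Theta$), upgrades this to $L_\e$ — which is indefinite on $K_{\boldsymbol\delta,\boldsymbol\xi}^{\perp}$, the $P\mathcal U_i$ being negative directions, but still has a spectral gap at $0$ uniform in the parameters, not closed by the $-\e$ shift since $\e=o(1)$. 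A contraction mapping argument then provides, for each $(\boldsymbol\delta,\boldsymbol\xi)\in\mathcal A_\e\times\Theta$, a unique small $\phi=\phi_{\boldsymbol\delta,\boldsymbol\xi}$ solving the projected equation $L_\e\phi=\mathcal N(\phi)+R$ modulo $K_{\boldsymbol\delta,\boldsymbol\xi}$, of class $C^{1}$ in the parameters and with $\|\phi\|\lesssim\|R\|$.

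Set $\mathcal J_\e(\boldsymbol\delta,\boldsymbol\xi):=J_\e(W_{\boldsymbol\delta,\boldsymbol\xi}+\phi_{\boldsymbol\delta,\boldsymbol\xi})$, $J_\e(u)=\tfrac12\int_\Omega|\nabla u|^{2}-\tfrac14\int_\Omega u_+^{4}-\tfrac\e2\int_\Omega u^{2}$; by the usual scheme, $(\boldsymbol\delta,\boldsymbol\xi)$ is a critical point of $\mathcal J_\e$ iff $W_{\boldsymbol\delta,\boldsymbol\xi}+\phi_{\boldsymbol\delta,\boldsymbol\xi}$ solves \eqref{pb4}. The heart of the proof is the $C^{1}$ expansion
\begin{equation}\label{redexp}
\mathcal J_\e(\boldsymbol\delta,\boldsymbol\xi)=a_0\,k+\boldsymbol\delta^{T}\mathbb M_\e(\boldsymbol\xi)\,\boldsymbol\delta-a_2\,\e\sum_{i=1}^{k}\delta_i^{2}\log\delta_i^{-1}+R_\e(\boldsymbol\delta,\boldsymbol\xi),
\end{equation}
where $a_0,a_2>0$ are explicit dimensional constants, $\mathbb M_\e(\boldsymbol\xi)=a_1M(\boldsymbol\xi)+O(\e)$ is a symmetric matrix ($a_1>0$, $M$ as in \eqref{Mxi}, the $O(\e)$ collecting the geometrically weighted lower-order terms, which are still quadratic in $\boldsymbol\delta$), the factor $\delta_i^{2}\log\delta_i^{-1}$ is the four-dimensional signature (from $\int_\Omega\mathcal U_{\delta,\xi}^{2}\sim a_2'\,\delta^{2}\log\delta^{-1}$), and $R_\e=o(\e\sum_i\delta_i^{2})$ in a $C^{1}$ sense uniform on $\mathcal A_\e\times\Theta$; the normalisations fixed above are exactly those for which $a_1/a_2$ gives the limit $\Lambda_1(\boldsymbol\xi_0)$ — not a multiple of it — in \eqref{k1}. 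Write $\Phi_\e:=\boldsymbol\delta^{T}\mathbb M_\e(\boldsymbol\xi)\boldsymbol\delta-a_2\e\sum_i\delta_i^{2}\log\delta_i^{-1}$. For fixed $\boldsymbol\xi\in\Theta$, the equation $\nabla_{\boldsymbol\delta}\Phi_\e=0$ reads, componentwise, $(\mathbb M_\e(\boldsymbol\xi)\boldsymbol\delta)_i=\tfrac{a_2}{2}\e\,(2\log\delta_i^{-1}-1)\,\delta_i$; since $\boldsymbol\delta$ has positive components and the $\log\delta_i^{-1}$ are mutually comparable, this forces $\mathbb M_\e(\boldsymbol\xi)\boldsymbol\delta$ to be a positive multiple of $\boldsymbol\delta$, so by the Perron–Frobenius property \eqref{impM} (stable under the $O(\e)$ perturbation) that multiple is $(1+o(1))\,a_1\Lambda_1(\boldsymbol\xi)$, $\boldsymbol\delta=(1+o(1))\,\mu_\e(\boldsymbol\xi)\,\boldsymbol\ee(\boldsymbol\xi)$, and $\e\log\mu_\e(\boldsymbol\xi)^{-1}\to\Lambda_1(\boldsymbol\xi)$. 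Since $\Lambda_1(\boldsymbol\xi)$ is simple, this critical point of $\Phi_\e(\cdot,\boldsymbol\xi)$ is nondegenerate in $\boldsymbol\delta$ (with smallest eigenvalue of order $\e$, along $\boldsymbol\ee(\boldsymbol\xi)$), and so is the corresponding one for $\mathcal J_\e$, because $R_\e$ contributes only $o(\e)$ to the $\boldsymbol\delta$-Hessian; hence solving $\nabla_{\boldsymbol\delta}\mathcal J_\e=0$ first, by the implicit function theorem, yields a $C^{1}$ branch $\boldsymbol\delta=\boldsymbol\delta_\e(\boldsymbol\xi)$. As the part of $\Phi_\e$ that is not quadratic in $\boldsymbol\delta$ is exactly $-a_2\e\sum_i\delta_i^{2}\log\delta_i^{-1}$, Euler's relation gives at once $\Phi_\e(\boldsymbol\delta_\e(\boldsymbol\xi),\boldsymbol\xi)=-\tfrac{a_2\e}{2}\,|\boldsymbol\delta_\e(\boldsymbol\xi)|^{2}$, whence, setting $\psi_\e(\boldsymbol\xi):=\mathcal J_\e(\boldsymbol\delta_\e(\boldsymbol\xi),\boldsymbol\xi)$,
\[
\psi_\e(\boldsymbol\xi)=a_0\,k-\tfrac{a_2\e}{2}\,\big(1+o(1)\big)\,|\boldsymbol\ee(\boldsymbol\xi)|^{2}\,\exp\!\big(-2\Lambda_1(\boldsymbol\xi)/\e\big)\quad\text{in }C^{1},
\]
so $\nabla\psi_\e(\boldsymbol\xi)=a_2\,\big(1+o(1)\big)\,|\boldsymbol\ee(\boldsymbol\xi)|^{2}\,\exp(-2\Lambda_1(\boldsymbol\xi)/\e)\,\nabla\Lambda_1(\boldsymbol\xi)$ is a strictly positive multiple of $\nabla\Lambda_1$ on $\Theta$. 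Since $\nabla\psi_\e=\partial_{\boldsymbol\xi}\mathcal J_\e(\boldsymbol\delta_\e(\cdot),\cdot)$ (the $\boldsymbol\delta$-derivative vanishing along the branch), a zero of $\nabla\psi_\e$ is a critical point of $\mathcal J_\e$; and $\mathtt{deg}(\nabla\psi_\e,\Theta,0)=\mathtt{deg}(\nabla\Lambda_1,\Theta,0)\neq0$ by the stability of $\mathscr K$ (Definition~\ref{yy1}). Therefore $\mathcal J_\e$ has a critical point $(\boldsymbol\delta_\e,\boldsymbol\xi_\e)$ with $\boldsymbol\xi_\e\to\boldsymbol\xi_0\in\mathscr K$; then $u_\e=W_{\boldsymbol\delta_\e,\boldsymbol\xi_\e}+\phi_{\boldsymbol\delta_\e,\boldsymbol\xi_\e}$ is a positive solution of \eqref{pb4} blowing up at $\xi_{1_0},\dots,\xi_{k_0}$, and since $\delta_{i,\e}=(1+o(1))\,\ee_i(\boldsymbol\xi_\e)\,\mu_\e(\boldsymbol\xi_\e)$ we get $\e\log\delta_{i,\e}^{-1}\to\Lambda_1(\boldsymbol\xi_0)$, which is \eqref{k1}.

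The main difficulty is the sharp $C^{1}$ expansion \eqref{redexp}: since the useful part of the reduced energy is of the exponentially small size $\e\,\exp(-2\Lambda_1(\boldsymbol\xi)/\e)$, one must push both the estimate of $R$ (measured so as to be of order $\e\max_i\delta_i$, which makes the $\phi$-correction to the energy $O(\e^{2}\max_i\delta_i^{2})=o(\e\sum_i\delta_i^{2})$) and the expansion of $J_\e(W)$ down to that order, keeping track meanwhile of the four-dimensional logarithmic terms and of the near-degeneracy — smallest eigenvalue $\asymp\e$ — of the $\boldsymbol\delta$-Hessian in the implicit-function step. The structural facts that then make the degree argument robust are that everything beyond the logarithmic term is quadratic in $\boldsymbol\delta$, so Euler's relation fixes $\mathcal J_\e$ along the branch $\boldsymbol\delta_\e(\cdot)$, and that the resulting coefficient $-\tfrac{a_2}{2}|\boldsymbol\ee(\boldsymbol\xi)|^{2}$ keeps a constant sign, so $\nabla\psi_\e$ is a positive multiple of $\nabla\Lambda_1$. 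The linear theory, although technical, is routine once the bubbles are known to be uniformly well separated.
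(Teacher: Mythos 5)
Your proposal takes a genuinely different route from the paper. You run a \emph{variational} Lyapunov--Schmidt reduction: you build the reduced energy $\mathcal J_\e(\boldsymbol\delta,\boldsymbol\xi)=J_\e(W_{\boldsymbol\delta,\boldsymbol\xi}+\phi_{\boldsymbol\delta,\boldsymbol\xi})$, prove the $C^1$-expansion $\mathcal J_\e=a_0k+\boldsymbol\delta^T\mathbb M_\e(\boldsymbol\xi)\boldsymbol\delta-a_2\e\sum_i\delta_i^2\log\delta_i^{-1}+R_\e$, solve $\nabla_{\boldsymbol\delta}\mathcal J_\e=0$ by the implicit function theorem (exploiting Perron--Frobenius to identify the Perron eigenvector, and controlling the near-degenerate Hessian whose smallest eigenvalue is $\asymp\e$), and then — your nicest observation — use Euler's relation for the $2$-homogeneous part to fix the reduced energy along the branch, $\psi_\e(\boldsymbol\xi)=a_0k-\tfrac{a_2\e}{2}|\boldsymbol\delta_\e(\boldsymbol\xi)|^2(1+o(1))$, so that $\nabla\psi_\e$ is a positive multiple of $\nabla\Lambda_1$, and a degree argument on $\Theta$ finishes. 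The paper instead works \emph{directly} with the projection conditions: it never writes down a reduced energy (it explicitly describes the reduced system as lacking a variational structure "at a first glance"), it reparametrises $\boldsymbol\delta$ as $\delta_i=e^{-8\pi^2\lambda/\e}d_i$, shows that $F_1=F_2=0$ is solved uniquely by $\lambda=\Lambda_1(\boldsymbol\xi)$, $\mathbf d=\mathbf d(\boldsymbol\xi)$ with an invertible $D_{\mathbf d,\lambda}(F_1,F_2)$ (via Schur complement and the simplicity of $\Lambda_1$), proves the algebraic identity $F_3(\boldsymbol\xi,\mathbf d(\boldsymbol\xi),\Lambda_1(\boldsymbol\xi))=\nabla\Lambda_1(\boldsymbol\xi)$ by a Rayleigh-quotient computation, and then invokes a general degree lemma (Lemma~\ref{isolated}) to pass from $\mathtt{deg}(\nabla\Lambda_1,\Theta,0)\neq0$ to $\mathtt{deg}(F,\Xi,0)\neq0$. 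Both routes identify $\nabla\Lambda_1$ as the effective reduced vector field. What your route buys is conceptual transparency (a single scalar functional, Euler's relation gives the branch energy for free) at the cost of having to prove the full $C^1$ expansion of the reduced energy down to the tiny order $\e\exp(-2\Lambda_1/\e)$; what the paper's route buys is that it sidesteps this delicate energy estimate — it only needs the $C^0$-estimates of the projection coefficients $\mathfrak c^j_h$ from Proposition~\ref{propro} — but requires the non-obvious identification $\varphi=\nabla\Lambda_1$ (eq.~\eqref{var2}) to re-discover the hidden gradient structure and Lemma~\ref{isolated} to lift the degree from $\boldsymbol\xi$-space to the full parameter space. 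Your argument is consistent with the paper's findings; the chief detail to make it airtight, which you rightly flag as the heart of the matter, is the uniform $C^1$ control on $R_\e$, including the $\partial_{\boldsymbol\xi}R_\e=o(\sum_i\delta_i^2)$ bound that the degree argument on $\nabla\psi_\e$ ultimately rests on.
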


The result is new and it can be seen as the counterpart of the asymptotical analysis performed by  K\"onig and Laurin's result in \cite{KL1}. Indeed in \cite[Theorem 1.1]
{KL1} the authors proved that if $u_\varepsilon$ is a solution to \eqref{pb4} which blows-up at $k$ points $\xi_{1_0}, \ldots, \xi_{k_0}$, then 
the rates of concentration  $\delta_{1_\e},\ldots,\delta_{k_\e}$ must satisfy \eqref{k1} and $\boldsymbol\xi_0:=\(\xi_{1_0}, \ldots, \xi_{k_0}\)\in\Omega_k^*$
and $\boldsymbol d:=\(d_1,\dots,d_k\)\in\mathbb R^k_+$, with  $d_i:=\lim\limits_{\e\to0}
 {\delta_{i_\e}\over\delta_{1_\e}} $,  must be a critical point of the function
\begin{equation}\label{k2}  (\boldsymbol d,\boldsymbol\xi)\longrightarrow \boldsymbol dM(\boldsymbol\xi)\boldsymbol d^T-\frac1{8\pi^2}\Lambda_1(\boldsymbol\xi_0)\boldsymbol d\boldsymbol d^T.\end{equation}

The proof of our result relies on a classical Ljapunov-Schmidt procedure, which is described in Section \ref{sec1}. The exponential decay of the concentration rates (see \eqref{k1})
makes it harder to derive  the so-called reduced problem \eqref{sys2}, which at a first glance does not have a variational structure.
In fact, the major difference with the higher dimensional case  (see \cite{MP}) and the main novelty of this paper lie in the study of the reduced problem, which is  carried out in  Section \ref{sec2}. In Section \ref{sec3} we show a couple of examples for which multipeaks solutions do exist. The first example is the dumbbell-shaped domain constructed by joining $k$ disjoint subdomains via thin necks. The second example is a thin annulus for which we prove the existence of solutions concentrating at two and four points. The restriction on the number of blowing-up points is merely due to technical reasons, indeed we strongly believe that the number of peaks grows as the thickness of the annulus decreases. (see \eqref{CON}).
All the technical estimates needed in  the reduction procedure are contained in  Appendix \ref{app}.

\section{The reduction procedure}\label{sec1}

\subsection{Preliminaries}
Let $H^{1}_{0}(\Omega)$ be the Hilbert space equipped with the usual inner product and the usual norm 
\begin{equation*}
\langle u,v \rangle := \int_{\Omega} \nabla u \cdot \nabla v ,\quad \| u \|  := \left( \int_{\Omega} | \nabla u |^{2} \right)^{\frac{1}{2}}.
\end{equation*}
For $r \in [1, +\infty)$ the space $L^{r}(\Omega)$ is  equipped with the standard norm 
\begin{equation*}
\| u \|_{r} = \left( \int_{\Omega} |u|^{r} \right)^{\frac{1}{r}}.
\end{equation*}

As usual, let $i^{*}_\Omega: L^{\frac{4}{3}}(\Omega) \to H^{1}_{0}(\Omega)$ be the adjoint operator of the embedding $i_\Omega : H^{1}_{0}(\Omega) \to L^{4}(\Omega)$, i.e. 
$$u=i^{*}_\Omega(f)\ \hbox{if and only if}\ 
\langle u, f \rangle = \int_{\Omega} f(x) \varphi(x) \ dx
\ \hbox{
for all}\ \varphi \in H^{1}_{0}(\Omega).
$$
The operator $i^{*}_\Omega : L^{\frac{4}{3}}(\Omega) \to H^{1}_{0}(\Omega)$ is continuous as 
\begin{equation*}
\| i^{*}_\Omega(f) \|_{H^{1}_{0}(\Omega)} \leq S^{-1} \| f \|_{\frac{4}{3}}
\end{equation*}
where $S$ is the best constant for Sobolev embedding.\\
Therefore, the  problem \eqref{pb4} can be rewritten as 
\begin{equation}\label{213}
u=i^{*}_\Omega(f(u)+ \varepsilon u),\ u \in H^{1}_{0}(\Omega),
\end{equation}
where $f(u)=(u^+)^3.$ \\
Next, we introduce the well-known Aubin-Talenti bubbles  
\begin{eqnarray*}
\mathcal{U}_{\delta,\xi}(x)=\alpha_4\left(\frac{\delta}{\delta^{2}+|x-\xi|^2}\right)=\delta^{-1}\mathcal{U}_{1,0}\left(\frac{x-\xi}{\delta}\right),\ \delta>0,\ x,\xi\in\mathbb R^4,\  
\end{eqnarray*}
with $\alpha_4:=2\sqrt 2$. We denote by $\Ua:=\mathcal{U}_{1,0}$.
They are all the positive solutions of (see \cite{A}, \cite{CGS}, \cite{T}) the critical problem
\begin{equation}\label{pblim}
 -\Delta\, \mathcal{U}_{\delta,\xi}=\mathcal{U}_{\delta,\xi}^{3}\ \hbox{in}\ \mathbb R^4.\end{equation}

Given a point $\xi\in\Omega,$ we denote by $P\mathcal{U}_{\delta,\xi}$ the projection of $\mathcal{U}_{\delta,\xi}$ into $H^1_0(\Omega)$, i.e. the unique solution  of 
\begin{eqnarray}\label{Pre0001}
 -\Delta\, P\mathcal{U}_{\delta,\xi}=\mathcal{U}_{\mu,\xi}^{3}\ \hbox{in}\ \Omega,\ 
 P\mathcal{U}_{\delta,\xi}=0\ \hbox{on}\ \partial\Omega.
\end{eqnarray}
\\  It is well known that the following expansion holds (see \cite{R})
\begin{equation}\label{exp}
P\U:=\U-\mathfrak C\d H(x, \xi)+\mathcal O\left(\d^{3}\right)\quad \mbox{as}\, \d\to 0\end{equation}
uniformly with respect to $x\in\Omega$ and $\xi$ in compact sets of $\Omega$ and 
\begin{equation}\label{exp1}
P\U:=\mathfrak C\d G(x, \xi)+\mathcal O\left(\d^{3}\right)\quad \mbox{as}\, \d\to 0\end{equation}
uniformly with respect to $x$ in compact sets of $\Omega\setminus\{\xi\}$ and $\xi$ in compact sets of $\Omega$ where $\mathfrak C:=2\alpha_4\omega=8\sqrt2\pi^2$.\\  

\subsection{The ansatz} 
Let $k \geq 1$ be a fixed integer. We look for a solution to \eqref{213} of the form 
\begin{equation}\label{soluzione}
u_\e = \sum_{i=1}^{k} P\Ui+\phi_\e, 
\end{equation}
where the blow-up rates are $\delta_{i}=\delta_{i}(\varepsilon) $ are choosen as
\begin{equation}\label{delta}
\delta_1:=e^{-\frac{8\pi^2 \lambda}{ \e}},\ \d_i:=e^{-\frac{8\pi^2 \lambda}{ \e}}d_i\ \hbox{with}\ \lambda>0\ \hbox{and}\
 d_i>0\ \hbox{for}\  i=2, \ldots, k
\end{equation}  
and the blow-up points are $\xi_{i}$ belong to the set
\begin{equation*}
D_\rho = \{ \pmb{\xi} \in \Omega^{*}_k\,\,:\,\, \mbox{dist}(\xi_{i}, \partial \Omega) \geq 2\rho, |\xi_{i} - \xi_{j}| \geq 2\rho,\,\, \forall i,j = 1, \cdots, k, i \neq j \}.
\end{equation*} for some $\rho>0$ small.\
In the following, we agree that $\boldsymbol \delta:=(\delta_1,\dots,\delta_k)$ and $\boldsymbol \xi:=(\xi_1,\dots,\xi_k).$\\
The higher order term  $\phi_\e$ belongs to the space  
\begin{equation*}
\mathcal K^{\perp}_{\pmb{\d}, \pmb{\xi}} =\left \{ \phi \in H^{1}_{0}(\Omega)\,\, :\,\,\ \langle \phi, \psi \rangle = 0\ \forall\ \psi\in \mathcal K_{\pmb{\d}, \pmb{\xi}}\right\},
\end{equation*}
where
\begin{equation*}
\mathcal K_{\pmb{\d}, \pmb{\xi}} = \mbox{span} \{ P \psi_{\delta_{i}, \xi_{i}}^{j}\,\, :\,\, i= 1, \cdots, k, j=0, \cdots, 4\}.
\end{equation*}
Here $P\psi_{\delta_{i}, \xi_{i}}^{j}$ are the orthogonal projections onto $H^1_0(\Omega)$ of the functions
\begin{equation*}
\psi_{\delta_{i}, \xi_{i}}^{j}(x) = \frac{1}{\delta_{i}} \psi^{j} \left(\frac{x-\xi_{i}}{\delta_{i}} \right),\   j=0, \ldots, 4,\ i=1, \ldots, k,
\end{equation*}
where 
\begin{equation*}
\psi^{0}(x) := \Ua(x) + \nabla \Ua(x) \cdot x  = \alpha_4 \frac{|x|^{2}-1}{(1+|x|^{2})^{2}}
\end{equation*}
and 
\begin{equation*}
\psi^{j}(x) : = \frac{\partial \Ua}{\partial x_{j}}(x) = -2\alpha_4 \frac{x_{j}}{(1+|x|^{2})^{2}}, \  j=1, \cdots, 4.
\end{equation*}
generate the space of solutions of the linear equation
\begin{equation*}
-\Delta \psi = 3\Ua^{2} \psi \ \hbox{in} \   \mathbb{R}^{4}.
\end{equation*}
It is useful to recall the well-known expansions
\begin{equation}\label{expderi}
P \psi^{0}_{\delta_{i}, \xi_{i}} =\psi^{0}_{\delta_{i}, \xi_{i}} -\mathfrak C\delta_i H(x, \xi_i)+\mathcal O(\delta_i^2)\end{equation}
and for all $j=1, \ldots, 4$
\begin{equation}\label{expderi1}
P \psi^{j}_{\delta_{i}, \xi_{i}} =\psi^{j}_{\delta_{i}, \xi_{i}} - \mathfrak C^2\delta_i^2\partial_{\xi_j} H(x, \xi_i)+\mathcal O(\delta_i^3).\end{equation}
\subsection{An equivalent system}
Let us introduce the linear projection $\Pi_{\pmb{\d}, \pmb{\xi}} : \mathcal K_{\pmb{\d}, \pmb{\xi}} \to \mathcal K_{\pmb{\d}, \pmb{\xi}}$ and $\Pi^{\perp}_{\pmb{\d}, \pmb{\xi}} : \mathcal K^{\perp}_{\pmb{\d}, \pmb{\xi}} \to \mathcal K^{\perp}_{\pmb{\d}, \pmb{\xi}}$ which are defined by 
\begin{equation*}
\Pi_{\pmb{\d}, \pmb{\xi}}(\phi) = \sum_{i=1, \cdots, k \atop j=0, \cdots, 4} \langle \phi, P \psi_{i}^{j} \rangle P \psi_{i}^{j} \quad\mbox{and}\quad\Pi^{\perp}_{\pmb{\d}, \pmb{\xi}}(\phi) = \phi - \Pi_{\pmb{\d}, \pmb{\xi}}(\phi).
\end{equation*}
In oder to simplify the notations we let also $W_{\pmb{\d}, \pmb{\xi}}:=\sum_{i=1}^k P\Ui$. \\ Since we look for a solution of the form $W_{\pmb{\d}, \pmb{\xi}}+\phi_\e$ then
equation \eqref{213} can be rewritten as the following system of two equations 
\begin{equation}\label{10}
\Pi^{\perp}_{\pmb{\d}, \pmb{\xi}} [ \mathcal{L}_{\pmb{\d}, \pmb{\xi}}(\phi_\e) - \mathcal{E}_{\pmb{\d}, \pmb{\xi}} - \mathcal{N}_{\pmb{\d}, \pmb{\xi}}(\phi_\e)] = 0
\end{equation}
and
\begin{equation}\label{11}
\Pi_{\pmb{\d}, \pmb{\xi}} [\mathcal{L}_{\pmb{\d}, \pmb{\xi}}(\phi_\e) - \mathcal{E}_{\pmb{\d}, \pmb{\xi}} - \mathcal{N}_{\pmb{\d}, \pmb{\xi}}(\phi_\e)] = 0,
\end{equation}
where the linear operator $\mathcal{L}_{\pmb{\d}, \pmb{\xi}}$ is 
\begin{equation}\label{29}
\mathcal{L}_{\pmb{\d}, \pmb{\xi}}(\phi_\e) = \phi_\e - i^{*}_\Omega(3\phi_\e W_{\delta, \xi}^{2} + \varepsilon \phi_\e),
\end{equation}
the error term $\mathcal{E}_{\pmb{\d}, \pmb{\xi}}$ is 
\begin{equation}\label{30}
\mathcal{E}_{\pmb{\d}, \pmb{\xi}}= i^{*}_\Omega(W_{\pmb{\d}, \pmb{\xi}}^{3} + \varepsilon W_{\pmb{\d}, \pmb{\xi}}) - W_{\pmb{\d}, \pmb{\xi}}
\end{equation}
and the nonlinear term $\mathcal{N}_{\pmb{\d}, \pmb{\xi}}$ is 
\begin{equation}\label{31}
\mathcal{N}_{\pmb{\d}, \pmb{\xi}}(\phi_\e) = i^{*}_\Omega\left(f(W_{\pmb{\d}, \pmb{\xi}}+\phi)-f(W_{\pmb{\d}, \pmb{\xi}})-f'(W_{\pmb{\d}, \pmb{\xi}})\phi\right).
\end{equation}

\subsection{Solving equation (\ref{10})}
The solvability of \eqref{10} in terms of $(\boldsymbol\xi,\boldsymbol\delta)$ is the first step in the Ljapunov-Schmidt procedure and follows by the proposition below.
\begin{proposition}\label{fixedpoint}
For any $\rho>0$ there exist $C>0$ and $\varepsilon_{0}>0$ such that for any $\varepsilon, \d_i \in (0, \varepsilon_{0})$ and any $\xi\in D_\rho$ there exists a unique $\phi=\phi_{\pmb{\d}, \pmb{\xi}}\in \mathcal K^{\perp}_{\pmb{\d}, \pmb{\xi}}$ solving \eqref{10} and satisfying 
\begin{equation}\label{phi}
\| \phi \| \leq C\( |\pmb{\d}|^2+\e|\pmb{\d}|\).\end{equation}
\end{proposition}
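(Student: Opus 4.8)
The plan is to rewrite \eqref{10} as a fixed-point equation for $\phi\in\mathcal K^{\perp}_{\pmb{\d}, \pmb{\xi}}$ and solve it by the contraction mapping theorem; the crux will be the uniform invertibility of the linearised operator. First I would set $L_{\pmb{\d}, \pmb{\xi}}:=\Pi^{\perp}_{\pmb{\d}, \pmb{\xi}}\circ\mathcal L_{\pmb{\d}, \pmb{\xi}}:\mathcal K^{\perp}_{\pmb{\d}, \pmb{\xi}}\to\mathcal K^{\perp}_{\pmb{\d}, \pmb{\xi}}$ and observe that, since $\mathcal L_{\pmb{\d}, \pmb{\xi}}=\mathrm{Id}-i^{*}_\Omega\bigl(3W_{\pmb{\d}, \pmb{\xi}}^{2}\,\cdot\,+\varepsilon\,\cdot\,\bigr)$ is the identity minus a compact operator, $L_{\pmb{\d}, \pmb{\xi}}$ is Fredholm of index zero on $\mathcal K^{\perp}_{\pmb{\d}, \pmb{\xi}}$; hence it suffices to establish an a~priori bound $\|L_{\pmb{\d}, \pmb{\xi}}\phi\|\geq C^{-1}\|\phi\|$, uniform for $\varepsilon,\d_i\in(0,\varepsilon_0)$ and $\pmb{\xi}\in D_\rho$, which automatically yields surjectivity and a uniformly bounded inverse $L_{\pmb{\d}, \pmb{\xi}}^{-1}$. (Along the way one records that the Gram matrix of $\{P\psi^{j}_{\delta_i,\xi_i}\}$ is uniformly invertible --- the bubbles being $2\rho$-separated, of comparable scale, and $\|\psi^{j}_{\delta,\xi}\|$ being scale invariant in dimension four --- so that $\Pi_{\pmb{\d}, \pmb{\xi}}$ and $\Pi^{\perp}_{\pmb{\d}, \pmb{\xi}}$ are well defined with uniformly bounded norms.)

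\textbf{Step 1 (uniform invertibility).} I would argue by contradiction. If the bound fails then, also negating the choice of $\varepsilon_0$, there are sequences $\varepsilon_n\to0$, $\pmb{\d}_n\to0$, $\pmb{\xi}_n\to\pmb{\xi}_\infty\in D_\rho$ and $\phi_n\in\mathcal K^{\perp}_{\pmb{\d}_n, \pmb{\xi}_n}$ with $\|\phi_n\|=1$ and $\|L_{\pmb{\d}_n, \pmb{\xi}_n}\phi_n\|\to0$. Writing (with $W_n:=W_{\pmb{\d}_n, \pmb{\xi}_n}$) $L_{\pmb{\d}_n, \pmb{\xi}_n}\phi_n=\phi_n-i^{*}_\Omega(3W_n^{2}\phi_n)-i^{*}_\Omega(\varepsilon_n\phi_n)+\sum_{i,j}c^{j}_{i,n}\,P\psi^{j}_{\delta_{i,n},\xi_{i,n}}$, one tests against each $P\psi^{j}_{\delta_{i,n},\xi_{i,n}}$ and uses the uniform invertibility of the Gram matrix to get $c^{j}_{i,n}\to0$, and notes $\|i^{*}_\Omega(\varepsilon_n\phi_n)\|\leq C\varepsilon_n\to0$. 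Rescaling about each concentration point, $\widetilde\phi_{i,n}(y):=\delta_{i,n}\phi_n(\delta_{i,n}y+\xi_{i,n})$, one shows $\widetilde\phi_{i,n}\weakto\widetilde\phi_{i,\infty}$ in $\mathcal D^{1,2}(\mathbb R^4)$, that $\widetilde\phi_{i,\infty}$ solves $-\Delta\widetilde\phi_{i,\infty}=3\,\Ua^{2}\,\widetilde\phi_{i,\infty}$ in $\mathbb R^4$ and is orthogonal to $\mathrm{span}\{\psi^0,\dots,\psi^4\}$; the non-degeneracy of the Aubin--Talenti bubble then forces $\widetilde\phi_{i,\infty}=0$ for all $i$. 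Since the $k$ bubbles are mutually $2\rho$-apart and of comparable scale the profiles decouple, and a routine argument upgrades this to $\|\phi_n\|\to0$, a contradiction.

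\textbf{Step 2 (error and nonlinear estimates).} Since $-\Delta P\Ui=\Ui^{3}$ in $\Omega$ we have $\mathcal E_{\pmb{\d}, \pmb{\xi}}=i^{*}_\Omega\bigl(W_{\pmb{\d}, \pmb{\xi}}^{3}-\sum_i\Ui^{3}+\varepsilon W_{\pmb{\d}, \pmb{\xi}}\bigr)$, whence $\|\mathcal E_{\pmb{\d}, \pmb{\xi}}\|\leq S^{-1}\bigl(\|W_{\pmb{\d}, \pmb{\xi}}^{3}-\sum_i\Ui^{3}\|_{4/3}+\varepsilon\|W_{\pmb{\d}, \pmb{\xi}}\|_{4/3}\bigr)$. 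Using $0<P\Ui<\Ui$, the expansion \eqref{exp}, the elementary bounds $\|\Ui^{2}\|_{L^{4/3}(\Omega)}\lesssim\delta_i$ and $\|\Ui\|_{L^{4/3}(\Omega)}\lesssim\delta_i$, and the $2\rho$-separation of the bubbles (which makes the cross terms $\|\Ui^{2}\Uj\|_{4/3}$, $i\neq j$, of order $\delta_i\delta_j$), one obtains $\|\mathcal E_{\pmb{\d}, \pmb{\xi}}\|\lesssim|\pmb{\d}|^{2}+\varepsilon|\pmb{\d}|$; the detailed computations are collected in Appendix \ref{app}. For the nonlinear part, with $f(t)=(t^+)^3$ one has $|f(a+b_1)-f(a+b_2)-f'(a)(b_1-b_2)|\lesssim|b_1-b_2|\,(|b_1|+|b_2|)\,(|a|+|b_1|+|b_2|)$, so the generalised Hölder inequality together with the scale-invariant bound $\|W_{\pmb{\d}, \pmb{\xi}}\|_{L^4(\Omega)}\leq C$ gives, for $\|\phi_1\|,\|\phi_2\|\leq1$,
\[
\|\mathcal N_{\pmb{\d}, \pmb{\xi}}(\phi_1)-\mathcal N_{\pmb{\d}, \pmb{\xi}}(\phi_2)\|\lesssim\bigl(\|\phi_1\|+\|\phi_2\|\bigr)\|\phi_1-\phi_2\|,\qquad \|\mathcal N_{\pmb{\d}, \pmb{\xi}}(\phi)\|\lesssim\|\phi\|^{2}.
\]

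\textbf{Step 3 (contraction) and the main obstacle.} Then \eqref{10} is equivalent to $\phi=T_{\pmb{\d}, \pmb{\xi}}(\phi):=L_{\pmb{\d}, \pmb{\xi}}^{-1}\Pi^{\perp}_{\pmb{\d}, \pmb{\xi}}\bigl(\mathcal E_{\pmb{\d}, \pmb{\xi}}+\mathcal N_{\pmb{\d}, \pmb{\xi}}(\phi)\bigr)$ on $\mathcal K^{\perp}_{\pmb{\d}, \pmb{\xi}}$, and Steps 1--2 give $C_1>0$ with $\|T_{\pmb{\d}, \pmb{\xi}}(\phi)\|\leq C_1\bigl(|\pmb{\d}|^{2}+\varepsilon|\pmb{\d}|+\|\phi\|^{2}\bigr)$ and $\|T_{\pmb{\d}, \pmb{\xi}}(\phi_1)-T_{\pmb{\d}, \pmb{\xi}}(\phi_2)\|\leq C_1\bigl(\|\phi_1\|+\|\phi_2\|\bigr)\|\phi_1-\phi_2\|$. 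Taking $C:=2C_1$ and $\varepsilon_0$ so small that $2C_1C\bigl(|\pmb{\d}|^{2}+\varepsilon|\pmb{\d}|\bigr)\leq\tfrac12$ on the admissible range, $T_{\pmb{\d}, \pmb{\xi}}$ maps the ball $\{\|\phi\|\leq C(|\pmb{\d}|^{2}+\varepsilon|\pmb{\d}|)\}$ of $\mathcal K^{\perp}_{\pmb{\d}, \pmb{\xi}}$ into itself and is a $\tfrac12$-contraction there, so Banach's theorem yields the unique $\phi=\phi_{\pmb{\d}, \pmb{\xi}}$ satisfying \eqref{phi}. (A standard application of the implicit function theorem shows in addition that $(\pmb{\d}, \pmb{\xi})\mapsto\phi_{\pmb{\d}, \pmb{\xi}}$ inherits the smoothness of the data, as needed in Section \ref{sec2}.) The only genuinely delicate point is Step 1: the blow-up argument must be run simultaneously at all $k$ concentration points, exploiting both their spatial separation and the comparability of the rates $\delta_i=\delta_1 d_i$ so that the limiting problems decouple into $k$ copies of the linearised critical equation on $\mathbb R^4$, where the non-degeneracy of $\Ua$ applies; the extra linear term $\varepsilon\phi$ is harmless since $\|i^{*}_\Omega(\varepsilon\,\cdot\,)\|_{\mathcal L(H^1_0(\Omega))}\leq C\varepsilon$.
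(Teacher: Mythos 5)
Your proof is correct and is exactly the standard Ljapunov--Schmidt argument that the paper itself only invokes by reference, citing \cite{MP,PR} for the details and noting that the bound \eqref{phi} follows from the error estimate proved in Appendix~\ref{app}. Your Steps 1--3 (uniform invertibility of the linearised operator via the blow-up contradiction, the error bound $\|\mathcal E_{\pmb{\d},\pmb{\xi}}\|\lesssim|\pmb{\d}|^{2}+\e|\pmb{\d}|$, and the Banach fixed point) are precisely the content of those references and of the appendix lemma the paper invokes.
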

\begin{proof} The proof is standard and we refer to  \cite{MP,PR} for the details. In particular, estimate \eqref{phi} follows by the estimate of the error term $\mathcal{E}_{\pmb{\d}, \pmb{\xi}}$ whose proof is postponed in the Appendix.
\end{proof}

\subsection{Solving equation (\ref{11})}
Let $u_\e=W_{\pmb{\d}, \pmb{\xi}}+\phi$ where $\phi\in \mathcal K^\bot_{\pmb{\d}, \pmb{\xi}}$ is the function founded in 
Proposition \ref{fixedpoint}.   Equation \eqref{11} rewrites as
\begin{equation}\label{r1}
-\Delta u_\e -f(u_\e)-\e u_\e=\sum_{j=0, \ldots, 4\atop h=1, \ldots, k}\mathfrak c^i_h f'(\Ui)\psi^j_{\d_h, \xi_h}
\end{equation}
for some real numbers $\mathfrak c^i_h$ that depend on $\boldsymbol\delta$ and   $\boldsymbol \xi$. 
The second step  in the Ljapunov-Schmidt procedure
 consists in  finding  the rate parameters $\boldsymbol\delta$ and the points $\boldsymbol \xi$ so that all the $\mathfrak c^j_h$'s  in \eqref{r1} are zero and this is done by  solving a reduced problem as stated  in the Proposition below, whose proof is postponed in the Appendix.
\begin{proposition}\label{propro}
It holds true that (up to some constants)
$$\mathfrak c^0_h=\delta_h^2\tau_\Omega(\xi_h)-\sum_{i\neq h}\d_i\d_h G(\xi_i, \xi_h)+\frac{1}{4\omega}\e \d_h^2\ln\d_h+o(|\pmb\d|^2),\ h=1,\dots,k$$
and
$$\mathfrak c^i_h=\frac{\partial}{\partial(\xi_h)_l}\left[\d_h^2\tau_\Omega(\xi_h)-2\sum_{h\neq i}\d_i\d_hG(\xi_i,\xi_h)\right]+o(|\pmb\d|^2),\ i=1,\dots,4,\ h=1,\dots,k $$
as $\e \to 0$, uniformly  with respect to $(\boldsymbol\xi, \mathbf{d}, \lambda)$ (see \eqref{delta}) in compact sets of $D_\rho \times (0, +\infty)^{k-1}\times (0, +\infty).$
\end{proposition}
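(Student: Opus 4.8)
The quantities $\mathfrak c^i_h$ are determined by pairing equation \eqref{r1} with the basis functions $P\psi^j_{\delta_h,\xi_h}$ of $\mathcal K_{\pmb\delta,\pmb\xi}$ and solving the resulting linear system; since the Gram matrix of the $P\psi^j_{\delta_h,\xi_h}$ is, after rescaling, diagonally dominant with entries of known size, inverting it only produces the ``up to some constants'' factors and lower-order corrections. Hence the heart of the matter is to expand
\[
\Big\langle\, W_{\pmb\delta,\pmb\xi}+\phi - i^{*}_\Omega\big(f(W_{\pmb\delta,\pmb\xi}+\phi)+\e(W_{\pmb\delta,\pmb\xi}+\phi)\big),\ P\psi^j_{\delta_h,\xi_h}\,\Big\rangle
\]
for $j=0$ and for $j=1,\dots,4$. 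Using the definition of $i^*_\Omega$ this pairing equals $\int_\Omega\big(-\Delta W_{\pmb\delta,\pmb\xi} - f(W_{\pmb\delta,\pmb\xi}+\phi) - \e(W_{\pmb\delta,\pmb\xi}+\phi)\big)\,P\psi^j_{\delta_h,\xi_h}$, so the plan is to split it into (i) the main error term $\int_\Omega\big(-\Delta W_{\pmb\delta,\pmb\xi}-f(W_{\pmb\delta,\pmb\xi})\big)P\psi^j_{\delta_h,\xi_h}$, (ii) the linear-in-$\e$ term $-\e\int_\Omega W_{\pmb\delta,\pmb\xi}P\psi^j_{\delta_h,\xi_h}$, and (iii) the $\phi$-dependent remainders coming from $f(W+\phi)-f(W)$ and $\e\phi$. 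For (iii), Proposition \ref{fixedpoint} gives $\|\phi\|\le C(|\pmb\delta|^2+\e|\pmb\delta|)$, and a standard estimate of $\mathcal N_{\pmb\delta,\pmb\xi}(\phi)$ together with Hölder and the Sobolev embedding shows these contribute $o(|\pmb\delta|^2)$ uniformly on compact sets; this is where the technical lemmas of Appendix \ref{app} are invoked.

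For term (i) I would write $-\Delta W_{\pmb\delta,\pmb\xi}=\sum_i \mathcal U_{\delta_i,\xi_i}^3$ and $f(W_{\pmb\delta,\pmb\xi})=\big(\sum_i P\mathcal U_{\delta_i,\xi_i}\big)^3$, and expand the difference. The diagonal interaction of $P\mathcal U_{\delta_h,\xi_h}$ with itself, using \eqref{exp} $P\mathcal U_{\delta_h,\xi_h}=\mathcal U_{\delta_h,\xi_h}-\mathfrak C\delta_h H(\cdot,\xi_h)+\mathcal O(\delta_h^3)$, produces after integration against $\psi^0_{\delta_h,\xi_h}$ (resp.\ $\psi^j_{\delta_h,\xi_h}$) a term proportional to $\delta_h^2\,\tau_\Omega(\xi_h)$ (resp.\ $\delta_h^2\,\partial_{(\xi_h)_l}\tau_\Omega(\xi_h)$), by the usual computation $\int_{\mathbb R^4}\mathcal U_{1,0}^3\,\psi^0 = $ const and $\int_{\mathbb R^4}\mathcal U_{1,0}^3\,\psi^j$-type identities combined with the mean value of $H$ near the diagonal. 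The off-diagonal interaction of $P\mathcal U_{\delta_i,\xi_i}$ with $P\mathcal U_{\delta_h,\xi_h}$ for $i\ne h$, via \eqref{exp1} $P\mathcal U_{\delta_i,\xi_i}=\mathfrak C\delta_i G(\cdot,\xi_i)+\mathcal O(\delta_i^3)$ on compact subsets away from $\xi_i$, yields a term proportional to $\delta_i\delta_h\,G(\xi_i,\xi_h)$ (resp.\ its $\xi_h$-derivative). These are precisely the matrix entries $m_{ij}(\pmb\xi)$ up to the universal constants absorbed into the statement; I would collect the numerical factors from the explicit Aubin--Talenti integrals.

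Term (ii), the linear-in-$\e$ contribution, is the genuinely four-dimensional feature. I would expand $-\e\int_\Omega W_{\pmb\delta,\pmb\xi}\,P\psi^0_{\delta_h,\xi_h}$: the leading part is $-\e\int_\Omega P\mathcal U_{\delta_h,\xi_h}\,\psi^0_{\delta_h,\xi_h}$, and after scaling $x=\xi_h+\delta_h y$ one meets the integral $\int \mathcal U_{1,0}(y)\,\psi^0(y)\,dy$ over a ball of radius $\sim\delta_h^{-1}$, which in dimension four diverges logarithmically; extracting the divergent part gives exactly the $\tfrac1{4\omega}\,\e\,\delta_h^2\ln\delta_h$ term (note $\delta_h^2\ln\delta_h$ arises because the $\psi$'s carry a factor $\delta_h^{-1}$ and $W$ a factor $\delta_h$, against a logarithmically large region, with $\omega=2\pi^2$ the relevant constant), while the finite remainder and the cross terms $\int_\Omega P\mathcal U_{\delta_i,\xi_i}\psi^0_{\delta_h,\xi_h}$ for $i\ne h$ are $o(|\pmb\delta|^2)$. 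For $j=1,\dots,4$ the corresponding integral $\int\mathcal U_{1,0}\,\psi^j$ vanishes by oddness, so no $\e\ln$ term appears in $\mathfrak c^i_h$, consistent with the statement. The main obstacle is precisely this careful bookkeeping in (ii): isolating the logarithmic divergence with the correct constant $\tfrac1{4\omega}$ and showing everything else is genuinely $o(|\pmb\delta|^2)$ \emph{uniformly} on compact sets of $D_\rho\times(0,\infty)^{k-1}\times(0,\infty)$, since $|\pmb\delta|$ itself depends on $\e$ through \eqref{delta} and the error $\e|\pmb\delta|$ from $\|\phi\|$ must be controlled against $|\pmb\delta|^2$; here one uses $\e\sim 8\pi^2\lambda/\ln\delta_1^{-1}$, so $\e|\pmb\delta|\ll|\pmb\delta|^2$ fails pointwise but $\e|\pmb\delta|^2\ln\delta_h$ is exactly of order $|\pmb\delta|^2$, which is why that term is kept and the others discarded. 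All the auxiliary $C^0$- and $C^1$-estimates in $(\pmb\xi,\pmb d,\lambda)$ needed for this are relegated to Appendix \ref{app}.
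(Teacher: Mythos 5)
For the dilation coefficients ($j=0$) your approach is essentially the paper's: test \eqref{r1} against $\psi^0_{\delta_h,\xi_h}$, expand via \eqref{exp}--\eqref{exp1}, extract the logarithmic divergence from $\e\int P\mathcal U_{\delta_h,\xi_h}\psi^0_{\delta_h,\xi_h}$, and bound the $\phi$-contributions using $\|\phi\|=\mathcal O(|\pmb\delta|^2+\e|\pmb\delta|)$; this matches Proposition \ref{espansione1}.

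For the translation coefficients ($j=1,\dots,4$), however, your proposal --- to test directly against $P\psi^j_{\delta_h,\xi_h}$ --- has a genuine precision gap. After the change of variables $x=\xi_h+\delta_h y$ the leading contribution of the bubble interaction to that pairing is $\delta_h^3\,\partial_x H(\xi_h,\xi_h)\int\mathcal U^2\psi^j y_\ell\,dy$, i.e.\ order $\delta_h^3$ (the order-$\delta_h^2$ term vanishes because $\int\mathcal U^2\psi^j=0$ by oddness). On the other hand, the $\phi$-dependent errors in that pairing, estimated by Hölder and Sobolev as in $(I_2)$ of Proposition \ref{espansione1}, are of size $\e\|\phi\|\,\|\psi^j_{\delta_h,\xi_h}\|_{4/3}\sim\e^2\delta_h^2$. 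Since by \eqref{delta} $\e\sim 1/\ln\delta_1^{-1}$ decays only logarithmically in $\delta$, one has $\e^2\delta_h^2\gg\delta_h^3$, so the $\phi$-error swamps the term you wish to isolate; the ``standard estimates'' you invoke from the appendix are not sharp enough here. (The precision threshold you identify for the $j=0$ equation, namely that $\e|\pmb\delta|^2\ln\delta_h$ is exactly order $|\pmb\delta|^2$, does not help you for $j\ge 1$ because the main term there is one order in $\delta$ smaller.)

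The paper avoids this by not testing against $\psi^j$ at all. Instead (Lemma \ref{lemma4.1}, Proposition \ref{espansione2}) it uses the Pohozaev-type identity
\[
\int_{B(\xi_h,\eta_h)}\bigl(-\Delta u_\e-f(u_\e)-\e u_\e\bigr)\partial_l u_\e\,dx
=\int_{\partial B(\xi_h,\eta_h)}\Bigl(-\tfrac12(\partial_\nu u_\e)^2\nu_l-\tfrac14 u_\e^4\nu_l-\tfrac12\e u_\e^2\nu_l\Bigr),
\]
which localizes everything to the sphere $\partial B(\xi_h,\eta_h)$, away from the concentration point. There $W_{\pmb\delta,\pmb\xi}$ has a clean $C^1$-expansion in terms of the harmonic function $\Theta_{i,h}(x)=-\delta_h H(x,\xi_h)+\sum_{i\ne h}\delta_i G(x,\xi_i)$, the quadratic-in-$\Theta$ boundary terms vanish by harmonicity, and the $\phi$-contribution is bounded by $\mathcal O(|\pmb\delta|^4)$ using the co-area formula together with Rey's boundary estimate $\int_{\partial\Omega}|\partial_\nu\phi|^2=o(|\pmb\delta|^2)$. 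This is the key idea your proposal is missing: without the passage to a boundary integral, the $\phi$-remainder cannot be beaten down below the $\delta_h^3$ level needed for the derivative equations.
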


By Proposition \ref{fixedpoint} and  Proposition \ref{propro}, it follows that the problem reduces to find $\delta_i>0$ and $\xi_i\in\Omega$ for $i=1, \ldots, k$ such that 
\begin{equation}\label{sys1}\left\{\begin{aligned} &\left(\d_i \tau_\Omega(\xi_i)-\sum_{j\neq i}\d_j G(\xi_i, \xi_j)+\frac{1}{8\pi^2}\e\d_i \ln\d_i\right)\left(1+o(1)\right)=0\, \quad &i=1, \ldots, k\\\\
&\left(\d_i \frac{\partial}{\partial(\xi_i)_\ell} \tau_\Omega(\xi_i)-2\sum_{j\neq i}\d_j \frac{\partial}{\partial(\xi_i)_\ell}G(\xi_i, \xi_j)\right)\left(1+o(1)\right)=0\quad &i=1, \ldots, k.\end{aligned}\right.\end{equation}
More precisely, taking into account the choice of the $\delta_i$'s in \eqref{delta}, we have to find $\lambda>0,$ $d_2,\dots,d_{k}\in (0,+\infty)$ 
and $(\xi_1,\dots,\xi_k)\in D_\rho$ solution of the  system 
\begin{equation}\label{sys2}\left\{\begin{aligned} &\tau_\Omega(\xi_1)-\sum_{j=2}^k d_j G(\xi_1, \xi_j)-\lambda+o(1)=0\,\\
&d_i \tau_\Omega(\xi_i)-G(\xi_1, \xi_i)-\sum_{j=2\atop j\neq i}^k d_j G(\xi_i, \xi_j)-\lambda d_i+o(1)=0,\ i=2, \ldots, k\\
&\frac{\partial}{\partial(\xi_1)_\ell}\tau_\Omega(\xi_1) -2\sum_{j=2}^k d_j\frac{\partial}{\partial(\xi_1)_\ell} G(\xi_1, \xi_j)+o(1)=0,\ \ell=1, \ldots, 4\\
&d_i \frac{\partial}{\partial(\xi_i)_\ell} \tau_\Omega(\xi_i)-2\frac{\partial}{\partial(\xi_i)_\ell}G(\xi_1, \xi_i)-2\sum_{j=2\atop j\neq i}d_j \frac{\partial}{\partial(\xi_i)_\ell} G(\xi_i, \xi_j)+o(1)=0,\\ &\hskip9truecm i=2, \ldots, k,\   \ell=1, \ldots, 4.\end{aligned}\right.\end{equation}

\section{The reduced problem}\label{sec2}
\subsection{Proof of Theorem \ref{main}: completed}
We aim to solve system \eqref{sys2}. We set
$\mathbf{d}:=(d_2,\ldots\ldots, d_k)$ and we  introduce three different functions.
The first one $F_1: D_\rho \times (0, +\infty)^{k-1}\times (0, +\infty)\to\mathbb R$
is related to the first equation in \eqref{sys2}:
$$ F_1(\boldsymbol\xi, \mathbf{d}, \lambda)= \tau_\Omega(\xi_1)-\sum_{j=2}^k d_j G(\xi_1, \xi_j)-\lambda.$$
The second one  $F_2:D_\rho \times (0, +\infty)^{k-1}\times (0, +\infty)\to\mathbb R^{k-1}$
is related to the second $k-1$ equations in \eqref{sys2}:
$$F_2(\boldsymbol\xi, \mathbf{d}, \lambda)=\left(\overline M(\boldsymbol\xi)-\lambda \mathtt{Id}\right)\mathbf{d}^T-\overline G(\boldsymbol\xi)$$
where $$\overline M(\boldsymbol\xi) :=\left(\begin{matrix} \tau_\Omega(\xi_2) & -G(\xi_2, \xi_3) & \ldots & -G(\xi_2, \xi_k)\\
-G(\xi_2, \xi_3) & \tau_\Omega(\xi_3) &\ldots &-G(\xi_3, \xi_k)\\
\ldots &\ldots &\ldots &\ldots\\
\ldots &\ldots &\ldots &\ldots\\
G(\xi_2, \xi_k) & -G(\xi_3, \xi_k) & \ldots &\tau_\Omega(\xi_k)
\end{matrix}\right)\ \hbox{and}\ \overline G(\boldsymbol\xi):=\left(\begin{matrix} G(\xi_1, \xi_2)\\ G(\xi_1, \xi_3) \\
\ldots \\
\ldots \\
G(\xi_1, \xi_k) 
\end{matrix}\right)$$
The third one $F_3:D_\rho \times (0, +\infty)^{k-1}\times (0, +\infty)\to\mathbb R^{4k}$
is related to the last $4k$ equations in \eqref{sys2}:
$$\begin{aligned}F_3 (\boldsymbol\xi, \mathbf{d}, \lambda)= \left(\widetilde M^1(\boldsymbol\xi)\mathbf{d}^T, \widetilde M^2(\boldsymbol\xi)\mathbf{d}^T, \widetilde M^3(\boldsymbol\xi)\mathbf{d}^T, \widetilde M^4(\boldsymbol\xi)\mathbf{d}^T\right)\end{aligned}$$
where
\begin{equation}\label{tildeMxi}\widetilde M^\ell(\boldsymbol\xi)=\(\widetilde m_{ij}^\ell(\xi)\)_{1\leq i, j\leq k}\ \hbox{and}\ \widetilde m_{ij}^\ell(\xi):=\left\{\begin{aligned}&\frac{\partial}{\partial(\xi_i)_\ell}  \tau_\Omega(\xi_i)\quad \ \ \ & \mbox{if}\,\, i=j\\
&-2\frac{\partial}{\partial(\xi_i)_\ell}  G(\xi_i, \xi_j)\quad &\mbox{if}\,\, i\neq j.\\
\end{aligned}\right.\end{equation}
Finally we also define
$$F(\boldsymbol\xi, \mathbf{d}, \lambda) :=(F_1(\boldsymbol\xi, \mathbf{d}, \lambda) , F_2(\boldsymbol\xi, \mathbf{d}, \lambda), F_3(\boldsymbol\xi, \mathbf{d}, \lambda) ).$$
It is clear that  solving \eqref{sys2} is equivalent to finding   $\boldsymbol\xi=\boldsymbol\xi(\varepsilon) \in D_\rho$,  $\lambda=\lambda(\varepsilon)\in (0, +\infty)$ and  $\mathbf{d}=\mathbf d(\varepsilon)=(d_2, \ldots, d_k)\in (0, +\infty)^{k-1}$ so that
\begin{equation}\label{globale}F(\boldsymbol\xi , \mathbf{d} , \lambda ) +o(1)=0,\end{equation}
where $o(1)$ is a continuous function in $(\boldsymbol\xi, \mathbf{d}, \lambda)$ which converges toward zero as $\varepsilon\to0$ 
uniformly  in compact sets of $D_\rho \times (0, +\infty)^{k-1}\times (0, +\infty).$
\\
We shall use  Lemma \ref{isolated} to prove that if $\mathscr K$ is a stable critical set of $\Lambda_1(\boldsymbol \xi)$ in the sense of Definition \eqref{yy1} then there exists a set $\mathscr Z$ such that
$F(\boldsymbol\xi,\mathbf{d},\lambda)=0$ for any $(\boldsymbol\xi,\mathbf{d},\lambda)\in \mathscr Z$ and 
 $\mathtt{deg}\left(F,\Xi,0\right)\not=0$ for an open neighbourhood of $\mathscr Z$.  By the properties of Brouwer degree, we immediately deduce the existence as $\varepsilon$ is small enough of $(\boldsymbol\xi (\varepsilon),\mathbf{d} (\varepsilon),\lambda (\varepsilon))$  close to $\mathscr Z$
solution of \eqref{globale}.\\

First of all, we observe that for any $\boldsymbol\xi\in D_\rho$  there exists a unique $\mathbf{d}=\mathbf{d}(\xi)\in(0, +\infty)^{k-1}$ and
$\lambda=\lambda(\xi)\in (0,+\infty)$ such that 
$$F_1(\boldsymbol\xi, \mathbf{d}(\boldsymbol\xi), \lambda(\boldsymbol\xi))=0\ \hbox{and}\ F_2(\boldsymbol\xi, \mathbf{d}(\boldsymbol\xi), \lambda(\boldsymbol\xi))=0.$$
Indeed, those equations are equivalent to the system
$$\left\{\begin{aligned}& \tau_\Omega(\xi_1)-\sum_{j=2}^k d_j G(\xi_1, \xi_j)-\lambda=0\\
 &\left(\overline M(\boldsymbol\xi)-\lambda \mathtt {Id}\right)\mathbf{d}^T-\overline G(\boldsymbol\xi)=0.\end{aligned}\right.$$
It is immediate to check that it has the solution
\begin{equation}\label{solus}\lambda(\xi):=\Lambda_1( \boldsymbol\xi)\quad \hbox{and}\quad  \mathbf d(\xi)^T:=\left( \overline M(\boldsymbol\xi)-\Lambda_1( \boldsymbol\xi) \mathtt {Id} \right)^{-1}\overline G(\boldsymbol\xi).\end{equation}
Indeed by the first equation we deduce
$$\lambda=\tau_\Omega(\xi_1)-\sum_{j=2}^k d_j G(\xi_1, \xi_j)$$
and combining the two equations we get
\begin{equation}\label{1}M(\boldsymbol\xi)(1, \mathbf{d})^T=\lambda (1, \mathbf{d})^T.\end{equation}
Now if we multiply \eqref{1} by $\ee(\boldsymbol\xi)$ given in \eqref{impM} we get
$$\Lambda_1( \boldsymbol\xi)\langle \ee(\boldsymbol\xi)^T, (1, \mathbf{d})^T\rangle=\langle \ee^T(\boldsymbol\xi), M(\boldsymbol\xi)(1, \mathbf{d})^T\rangle =\lambda\langle \ee(\boldsymbol\xi)^T, (1, \mathbf{d})^T\rangle,$$  which implies that
$\lambda=\Lambda_1( \boldsymbol\xi).$ Moreover,  since $\Lambda_1( \boldsymbol\xi)$ is simple, it is not difficult to check that the  matrix $\overline M(\boldsymbol\xi)-\Lambda_1( \boldsymbol\xi) \mathtt {Id}$ is invertible, i.e. 
\begin{equation}\label{solus3}{\rm det}\left(\overline M(\boldsymbol\xi)-\Lambda_1( \boldsymbol\xi) \mathtt {Id}\right)\neq 0.\end{equation}
Then it follows the existence of  $\mathbf d$ as in \eqref{solus}.

Let us prove \eqref{solus3}.
Assume by contradiction that there exists $\bar {\mathtt e}\in\mathbb R^{k-1}$ such that
$$\left(\overline M(\boldsymbol\xi)-\Lambda_1( \boldsymbol\xi) \mathtt {Id}\right)\bar {\mathtt e}=0.$$
We claim that $(0,\bar {\mathtt e}^T)\in\mathbb R^{k}$ is an eigenvector of $M(\boldsymbol\xi)$ associated with the eigenvalue $\Lambda_1( \boldsymbol\xi)$ which is not a multiple of ${\mathtt e}(\xi)$ (see \eqref{impM}) and a contradiction arises.
Indeed, 
$$\(M(\boldsymbol\xi)-\Lambda_1( \boldsymbol\xi) \mathtt {Id}\right)(0,\bar {\mathtt e})=(-\sum_{i=2}^k G(\xi_1,\xi_i)\bar e_{i-1},0),$$
the eigenvector  ${\mathtt e}(\xi)=(1,\tilde{\mathtt e})\in\mathbb R\times \mathbb R^{k-1}$ satisfies
$$ 
\(\overline M(\boldsymbol\xi)-\Lambda_1( \boldsymbol\xi) \mathtt {Id}\right)\tilde {\mathtt e}=-\(G(\xi_1,\xi_2) ,\dots,G(\xi_1,\xi_k)\)^T$$
and so
 $$ -\sum_{i=2}^k G(\xi_1,\xi_i)\bar e_{i-1}=\bar {\mathtt e}^T\(\overline M(\boldsymbol\xi)-\Lambda_1( \boldsymbol\xi) \mathtt {Id}\right)\tilde {\mathtt e}=\tilde {\mathtt e}^T\(\overline M(\boldsymbol\xi)-\Lambda_1( \boldsymbol\xi) \mathtt {Id}\right)\bar {\mathtt e}=0.$$

Next we observe that
\begin{equation}\label{solus2}
D_{ \mathbf{d},\lambda}\left(F_1(\boldsymbol\xi, \mathbf{d}, \lambda) , F_2(\boldsymbol\xi, \mathbf{d}, \lambda)\)_{ \mathbf{d}= \mathbf{d}(\xi),\lambda=\lambda(\xi)}\quad \hbox{is invertible.}
\end{equation}
Indeed
$$D_{ \mathbf{d},\lambda}\left(F_1(\boldsymbol\xi, \mathbf{d}, \lambda) , F_2(\boldsymbol\xi, \mathbf{d}, \lambda)\)_{ \mathbf{d}= \mathbf{d}(\xi),\lambda=\lambda(\xi)}=\(\begin{matrix}
&-\overline G(\boldsymbol\xi)^T &-1\\
&\overline M(\boldsymbol\xi)-\Lambda_1(\boldsymbol\xi) \mathtt {Id} &-\mathbf{d}(\boldsymbol\xi)^T\\
\end{matrix}\)$$
and by Schur complement formula
$$\begin{aligned}&\mathtt{det}\ \(\begin{matrix}
&-\overline G(\boldsymbol\xi)^T &-1\\
&\overline M(\boldsymbol\xi)-\Lambda_1(\boldsymbol\xi) \mathtt {Id} &-\mathbf{d}(\boldsymbol\xi)^T\\
\end{matrix}\)\\
&=-\mathtt{det}\(\overline M(\boldsymbol\xi)-\Lambda_1(\boldsymbol\xi) \mathtt {Id}\)\(1+\overline G(\boldsymbol\xi)^T\(\overline M(\boldsymbol\xi)-\Lambda_1(\boldsymbol\xi) \mathtt {Id}\)^{-1}\mathbf{d}(\boldsymbol\xi)^T\)\\
&=-\mathtt{det}\(\overline M(\boldsymbol\xi)-\Lambda_1(\boldsymbol\xi) \mathtt {Id}\)\(1+\mathbf{d}(\boldsymbol\xi)\mathbf{d}(\boldsymbol\xi)^T\)\not=0,\end{aligned}$$
because \eqref{solus3} holds true and 
$$ \left(\overline M(\boldsymbol\xi)-\Lambda_1(\boldsymbol\xi) \mathtt {Id}\right)\mathbf{d}(\boldsymbol\xi)^T=\overline G(\boldsymbol\xi)\ \Rightarrow\
 \overline G(\boldsymbol\xi)^T=\mathbf{d}(\boldsymbol\xi)\left(\overline M(\boldsymbol\xi)-\Lambda_1(\boldsymbol\xi) \mathtt {Id}\right)^T=\mathbf{d}(\boldsymbol\xi)\left(\overline M(\boldsymbol\xi)-\Lambda_1(\boldsymbol\xi) \mathtt {Id}\right).$$
Finally,  we claim that
\begin{equation}\label{var2}
\varphi(\boldsymbol\xi):=F_3(\boldsymbol\xi, \mathbf{d}(\boldsymbol\xi), \Lambda_1(\boldsymbol\xi))=\nabla \Lambda_1(\boldsymbol\xi).
\end{equation}

From \eqref{1} it follows that
$$\begin{aligned}\Lambda_1( \boldsymbol\xi)&:=\frac{(1,\mathbf{d}) M(\boldsymbol\xi)(1,\mathbf d)^T}{\left(1+\sum_{j=2}^k d_{j}^2\right)}\\
&=\frac{\tau_\Omega(\xi_1)+\sum_{j=2}^k d_{j}^2 \tau_\Omega(\xi_j) -2\sum_{j=2}^k d_{j} G(\xi_1, \xi_j)-\sum_{j=2}^k\sum_{\ell\neq 1\atop \ell \neq j}d_{j}d_{\ell} G(\xi_j, \xi_\ell)}{\left(1+\sum_{j=2}^k d_{j}^2\right)}.\end{aligned}$$
Now we compute $\nabla \Lambda_1 $.

$$\begin{aligned} &\left(1+\sum_{j=2}^k d_{j}^2\right)\frac{\partial}{\partial (\xi_1)_1}\Lambda_1\\
&=\frac{\partial}{\partial (\xi_1)_1} \tau_\Omega(\xi_1) -2\sum_{j=2}^k d_{j}\frac{\partial}{\partial (\xi_1)_1} G(\xi_1, \xi_j)\\
& +\left[\sum_{j=2}^k \left(2d_{j}\frac{\partial}{\partial (\xi_1)_1} d_{j} \tau_\Omega(\xi_j)-2\frac{\partial}{\partial (\xi_1)_1} d_{j} G(\xi_1, \xi_j)\right.\right.\\
&\left.\left.\qquad-\sum_{\ell\neq 1\atop \ell\neq j}\left(\frac{\partial}{\partial (\xi_1)_1} d_{j} d_{\ell} G(\xi_\ell, \xi_j)+d_{j,}\frac{\partial}{\partial (\xi_1)_1} d_{\ell} G(\xi_\ell, \xi_j)\right)\right)\right]\\
&-\Lambda_1\sum_{j=2}^k2d_{j} \frac{\partial}{\partial (\xi_1)_1} d_{j}\\
&=\frac{\partial}{\partial (\xi_1)_1} \tau_\Omega(\xi_1) -2\sum_{j=2}^k d_{j}\frac{\partial}{\partial (\xi_1)_1} G(\xi_1, \xi_j)\\
&+\left[\sum_{j=2}^k 2\frac{\partial}{\partial (\xi_1)_1} d_{j} \left(d_{j}\tau_\Omega(\xi_j)- G(\xi_1, \xi_j)-\sum_{\ell\neq 1\atop \ell\neq j} d_{\ell} G(\xi_\ell, \xi_j)\right)\right]-\Lambda_1\sum_{j=2}^k2d_{j} \frac{\partial}{\partial (\xi_1)_1} d_{j}\\
&=\frac{\partial}{\partial (\xi_1)_1} \tau_\Omega(\xi_1) -2\sum_{j=2}^k d_{j}\frac{\partial}{\partial (\xi_1)_1} G(\xi_1, \xi_j)+\left[\sum_{j=2}^k 2\frac{\partial}{\partial (\xi_1)_1} d_{j} d_{j} \Lambda_1\right]-\Lambda_1\sum_{j=2}^k2d_{j} \frac{\partial}{\partial (\xi_1)_1} d_{j}\\
&=\frac{\partial}{\partial (\xi_1)_1} \tau_\Omega(\xi_1) -2\sum_{j=2}^k d_{j}\frac{\partial}{\partial (\xi_1)_1} G(\xi_1, \xi_j)=\left(\widetilde M^1(\xi)\mathbf d^T\right)_{1}
\end{aligned}$$
since for any $i$ $$d_{i}\tau_\Omega(\xi_i) -G(\xi_1, \xi_i)-\sum_{j\neq i\atop j\neq 1} d_{j} G(\xi_i, \xi_j)=\Lambda_1 d_{i}$$ 
Similarly, a direct computations  shows that 
$$\left(1+\sum_{j=2}^k d_{j}^2\right)\frac{\partial}{\partial (\xi_j)_\ell}\Lambda_1(M(\xi))=d_j\left(\widetilde M^\ell(\xi)\mathbf d^T\right)_j.$$
Hence \eqref{var2} follows.

Now, if 
  $\mathscr K  $ is a stable critical set of $\Lambda_1,$ then by Definition \ref{yy1} and \eqref{var2} $\mathtt {deg}(\varphi, \Theta,0)\not=0$ for some neighbourhood $\Theta$ of $\mathscr K.$ Therefore, by 
 Lemma \ref{isolated} the set  $\mathscr Z:=\left\{(\boldsymbol\xi, \mathbf d(\xi), \Lambda_1(\boldsymbol\xi))\ :\ \boldsymbol\xi\in \mathscr K\right\}$ is such that 
$\mathtt {deg}(F, \Xi, 0)\not=0,$  for some neighbourhood $\Xi$ of $\mathscr Z.$ This completes the proof.

\subsection{A key lemma}
Let $\mathcal A\times \mathcal B$ an open subset of $\mathbb R^m\times \mathbb R^h$ and define the the $C^1-$ maps $$\Phi_1: \mathcal A\times \mathcal B \to \mathbb R^h,\quad \quad \Phi_2: \mathcal A\times \mathcal B\to \mathbb R^m.$$ Let, also, for every $(x, y)\in\mathcal A\times\mathcal B$ \begin{equation}\label{F}\Phi(x, y)=(\Phi_1(x, y), \Phi_2(x, y)).\end{equation}

The following result holds.
\begin{lemma}\label{isolated}
Assume for any $x\in\mathcal A$ there exists a unique $y:=y(x)\in\mathcal B$ such that $\Phi_1(x, y(x))=0$ and $D_y \Phi_1(x, y(x))$ is invertible. Let $\varphi(x):=\Phi_2(x, y(x)).$\\ 
If $\mathscr K\subset\mathcal A$ is such that 
$$\varphi(x_0)=0\ \hbox{for any}\ x_0\in\mathscr K\ \hbox{and}\ \mathtt{deg}\left(\varphi, \Theta,0\right)\neq 0$$
for some open neighbourhood $\Theta$ of $\mathscr K$,  then the
set
$$\mathscr Z:=\{(x_0, y_0)\in \mathcal A\times\mathcal B\ :\ x_0\in\mathscr K,\ y_0=y(x_0)\}$$ is such that 
$$\Phi(x_0, y_0)=0\ \hbox{for any}\ (x_0, y_0)\in \mathscr Z\ \hbox{and}\ \mathtt{deg}\left(\Phi, \Xi,0\right)\neq 0,$$
for some open neighbourhood $\Xi$ of $\mathscr Z.$
\end{lemma}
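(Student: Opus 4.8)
The plan is to reduce the degree of the full map $\Phi$ on a suitable neighbourhood $\Xi$ of $\mathscr Z$ to the degree of $\varphi$ on $\Theta$ by a homotopy argument, using the implicit function theorem (in its global-over-$\mathcal A$ form, which is exactly the hypothesis) to ``straighten'' the zero set of $\Phi_1$. First I would fix a compact neighbourhood $K$ of $\mathscr K$ contained in $\Theta$ on which $\varphi$ has no zeros on $\partial K$ (possible since $\mathtt{deg}(\varphi,\Theta,0)$ is defined, so $0\notin\varphi(\partial\Theta)$, and one can shrink). By hypothesis the map $x\mapsto y(x)$ is continuous (indeed $C^1$, by the implicit function theorem applied to $\Phi_1$, since $D_y\Phi_1(x,y(x))$ is invertible), so the graph $\Gamma:=\{(x,y(x)):x\in K\}$ is a compact $C^1$ manifold and $\mathscr Z=\{(x,y(x)):x\in\mathscr K\}\subset\Gamma$. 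I would then take as neighbourhood of $\mathscr Z$ a tube $\Xi:=\{(x,y): x\in \mathrm{int}\,K,\ |y-y(x)|<r\}$ for $r>0$ small enough that $D_y\Phi_1(x,y)$ remains invertible for $(x,y)\in\overline\Xi$ (compactness of $\Gamma$) and $\Phi_1(x,y)\ne 0$ on $\overline\Xi$ whenever $|y-y(x)|=r$ — the latter because $\Phi_1(x,y(x))=0$ and $\Phi_1(x,\cdot)$ is locally invertible near $y(x)$ with derivative bounded away from degeneracy, so $\Phi_1$ cannot vanish again in that slab for small $r$.

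The core step is the homotopy. On $\overline\Xi$ define
\begin{equation*}
H_t(x,y):=\bigl(\Phi_1(x,y),\,(1-t)\Phi_2(x,y)+t\,\varphi(x)\bigr),\qquad t\in[0,1],
\end{equation*}
so that $H_0=\Phi$ and $H_1=(\Phi_1,\varphi)$. I must check $H_t\ne 0$ on $\partial\Xi$ for all $t$. The boundary $\partial\Xi$ splits into the ``lateral'' part $\{|y-y(x)|=r,\ x\in\overline K\}$ and the ``vertical'' part $\{x\in\partial K,\ |y-y(x)|\le r\}$. On the lateral part, $\Phi_1(x,y)\ne 0$ by the choice of $r$, so the first component of $H_t$ is already nonzero. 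On the vertical part, a zero of $H_t$ would force $\Phi_1(x,y)=0$; but with $x\in\overline K\subset\Theta$ fixed and $|y-y(x)|\le r$ small, the only solution of $\Phi_1(x,\cdot)=0$ in that ball is $y=y(x)$ (again by local invertibility of $\Phi_1(x,\cdot)$), hence $y=y(x)$ and then the second component of $H_t$ equals $(1-t)\Phi_2(x,y(x))+t\varphi(x)=(1-t)\varphi(x)+t\varphi(x)=\varphi(x)$, which is nonzero because $x\in\partial K$ and $\varphi$ has no zero on $\partial K$. Thus $0\notin H_t(\partial\Xi)$ and homotopy invariance gives $\mathtt{deg}(\Phi,\Xi,0)=\mathtt{deg}\bigl((\Phi_1,\varphi),\Xi,0\bigr)$.

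Finally I would compute the right-hand side by the product/excision structure of the degree. Since $\varphi$ depends only on $x$ and the only zeros of $(\Phi_1,\varphi)$ in $\Xi$ lie on the graph $\Gamma$ (where $\Phi_1=0$ forces $y=y(x)$), one can use the change of variables $(x,y)\mapsto(x,\Phi_1(x,y))$, whose Jacobian is $\det D_y\Phi_1(x,y)$ — of constant sign on $\overline\Xi$ by connectedness and nonvanishing — to identify $\mathtt{deg}\bigl((\Phi_1,\varphi),\Xi,0\bigr)=\pm\,\mathtt{deg}(\varphi,\mathrm{int}\,K,0)=\pm\,\mathtt{deg}(\varphi,\Theta,0)$, the last equality by excision since all zeros of $\varphi$ near $\mathscr K$ sit in $\mathrm{int}\,K$. (Equivalently: $(\Phi_1,\varphi)$ is homotopic through $\bigl(\Phi_1(x,y),\varphi(x)\bigr)\rightsquigarrow\bigl(y-y(x),\varphi(x)\bigr)$ on $\overline\Xi$, the latter being a genuine product map $\mathrm{id}\times$-type, whose degree is $\mathtt{deg}(\varphi,\Theta,0)$ up to the sign of $\det D_y\Phi_1$.) Hence $\mathtt{deg}(\Phi,\Xi,0)=\pm\,\mathtt{deg}(\varphi,\Theta,0)\ne 0$, and in particular $\Phi$ has a zero in $\Xi$; that $\Phi$ vanishes on all of $\mathscr Z$ is immediate since $\Phi_1(x_0,y(x_0))=0$ and $\Phi_2(x_0,y(x_0))=\varphi(x_0)=0$ for $x_0\in\mathscr K$.

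I expect the main obstacle to be the bookkeeping around the lateral boundary of the tube $\Xi$ — i.e. making rigorous the claim that $r$ can be chosen uniformly (over the compact set $K$) so that $\Phi_1$ does not vanish on $\{|y-y(x)|=r\}$. This is where the invertibility of $D_y\Phi_1$ along the graph, together with a compactness argument, is essential; once that uniform $r$ is pinned down, the homotopy and the degree computation are routine.
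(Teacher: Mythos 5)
Your proof is correct and closely parallels the paper's: both of you construct the tube $\Xi=\{(x,y): x\in\Theta,\ |y-y(x)|<r\}$, show by compactness and the invertibility of $D_y\Phi_1$ along the graph that $\Phi_1\neq 0$ on the lateral boundary $\{|y-y(x)|=r\}$, and on the vertical boundary observe that $\Phi_1=0$ forces $y=y(x)$ so that the second component reduces to $\varphi(x)\neq 0$. The one genuine difference is the final step converting $\mathtt{deg}(\Phi,\Xi,0)$ into $\pm\,\mathtt{deg}(\varphi,\Theta,0)$: the paper differentiates $\varphi(x)=\Phi_2(x,y(x))$ by the chain rule and applies the Schur complement formula to get $\det D\Phi=\det(D_y\Phi_1)\,\det(D_x\varphi)$ and then invokes the definition of the Brouwer degree, whereas you run the homotopy $H_t=(\Phi_1,(1-t)\Phi_2+t\varphi)$ (admissible on $\partial\Xi$ precisely because on the vertical part a zero of $\Phi_1$ pins $y=y(x)$ and then the second slot is $\varphi(x)$ for every $t$) followed by a change of variables $(x,y)\mapsto(x,\Phi_1(x,y))$. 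Your route makes the degree bookkeeping explicit where the paper's closing sentence is terse, and both are valid. One caveat on your parenthetical alternative: the linear homotopy $(\Phi_1(x,y),\varphi(x))\rightsquigarrow(y-y(x),\varphi(x))$ is not automatically admissible on the lateral boundary, since a convex combination of two nonzero vectors in $\mathbb{R}^h$ can vanish; making that variant rigorous requires first replacing $\Phi_1$ by its linearisation $D_y\Phi_1(x,y(x))(y-y(x))$ for $r$ small and then a $\mathrm{GL}$-homotopy of $D_y\Phi_1(x,y(x))$ to $\pm\mathrm{Id}$. The primary change-of-variables argument you gave does not have this issue.
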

\begin{proof}
First of all, we observe that $\Phi(x_0, y_0)=0$ for any $(x_0, y_0)\in \mathscr Z$, since $y_0=y(x_0)$ and
$$\Phi_1(x_0,y_0)=0\ \hbox{and}\ \Phi_2(x_0,y_0)=\varphi(x_0)=0.$$
Next,  we prove that
 there exists $\rho_0>0$ such that  if $\rho\in(0,\rho_0)$, $x\in \Theta$ and $|y-y(x)|=\rho$  then $\Phi_1(x,y)\not=0.$ 
 Assume by contradiction there exist sequences $\rho_n\to0,$ $x_n\to x_0$ and $y_n\in\mathcal B$ such that
 $ |y_n-\tilde y_n|=\rho_n,$ with $\tilde y_n=y(x_n)\to \tilde y_0=y(x_0) ,$ and  $\Phi_1(x_n,y_n)=0.$ Then by Taylor's formula
 $$\Phi_1(x_n,y_n)=\Phi_1(x_n,\tilde y_n)+D_y\Phi_1(x_n,\tilde y_n)(y_n-\tilde y_n)+\mathcal O\(|y_n-\tilde y_n)|^2\),$$
 we deduce
 $$D_y\Phi_1(x_n,\tilde y_n){y_n-\tilde y_n \over \rho_n}=\mathcal O\(\rho_n\),$$
so passing to the limit, taking into account that (up to a subsequence) ${y_n-\tilde y_n \over \rho_n}\to z$ with $|z|=1$, we get
 $D_y\Phi_1(x_0,\tilde y_0)z=0$ and  a contradiction arises since   $D_y\Phi_1(x_0,\tilde y_0)$ is invertible.
Now, we can define the admissible neighbourhood  of $\mathscr Z$ as
$$\Xi:=\{(x,y)\in\mathcal A\times\mathcal B\ :\ x\in \Theta,\ |y-y(x)|< \rho \}$$
and it is immediate to check that 
 $\Phi(x,y)\not=0$ for any $(x,y)\in\partial \Xi.$ Indeed if  $|y-y(x)|=\rho$ then   $\Phi_1(x,y)\not=0$ by the above remark. On the other hand if $x\in\partial \Theta$ and $\Phi_1(x,y)=0$ then $y=y(x)$ and $\Phi_2(x,y)=\varphi(x)\not=0.$
\\
Finally, to derive $\mathtt{deg}\left(\Phi, \Xi,0\right)$ we need to compute $D\Phi$. First, we point out 
\begin{equation}\label{Dvarphi}\begin{aligned}D_x\varphi&=D_x \Phi_2(x, y(x))+D_y\Phi_2(x, y(x))D_x y \\
&=D_x \Phi_2(x, y(x))-D_y\Phi_2(x, y(x))\circ\left(D_y \Phi_1(x, y(x))\right)^{-1}\circ D_x \Phi_1(x, y(x)),
\end{aligned}\end{equation}
because  by $\Phi_1(x, y(x))=0$  and the invertibility $D_y \Phi_1(x, y(x))$  
$$D_x \Phi_1(x, y(x)) +D_y \Phi_1(x, y(x)) D_x y=0\ \Rightarrow\ D_x y=-\left(D_y \Phi_1(x, y(x))\right)^{-1}\circ D_x \Phi_1(x, y(x)).$$ 
Next, we have
$$D\Phi=\left(\begin{matrix}  D_x \Phi_1& D_y \Phi_1 \\  D_x \Phi_2 & D_y \Phi_2 \end{matrix}\right)$$
and by \eqref{Dvarphi} and  Schur complement formula
$$\begin{aligned}\mathtt{det}(D\Phi)&=\mathtt{det}(D_y \Phi_1)\mathtt{det}\left(-D_x \Phi_2+D_y\Phi_2\circ \left(D_y \Phi_1\right)^{-1}\circ D_x \Phi_1\right)\\
&=\mathtt{det}(D_y \Phi_1)\mathtt{det}(D_x \varphi).\end{aligned}$$ 
The claim follows by the definition of  the Brouwer degree. 
\end{proof}

\section{Examples}\label{sec3}

\subsection{The dumb-bell}
Let us consider $k$  smooth bounded domains $\omega_1,\dots,\omega_k$ with  $\overline{\omega}_i \cap \overline{\omega}_j= \emptyset$ if $i\not=j.$
Given $\rho>0$ small enough we consider the domain $\Omega_\rho$ obtained joining the $k$ domains with thin channels of size $\rho.$  Set $\Omega_0:=\omega_1\cup\dots\cup\omega_k.$
Given a point $\boldsymbol\xi=(\xi_1,\dots,\xi_k)\in \Omega_\rho^k$ we define the matrix
$$M_\rho(\boldsymbol\xi)=\left(\begin{matrix}
&\tau_{\rho}(\xi_1)&-G_\rho(\xi_1,\xi_2)&\dots&-G_\rho(\xi_1,\xi_k)\\
&-G_\rho(\xi_1,\xi_2)&\tau_{\rho}(\xi_2)&\dots&-G_\rho(\xi_2,\xi_k)\\
&\vdots&\vdots&\ddots&\vdots\\
&-G_\rho(\xi_1,\xi_k)&-G_\rho(\xi_2,\xi_k)&\dots&\tau_{\rho}(\xi_k)\\
\end{matrix}\right)$$
where $G_\rho$ is the Green's function and $\tau_\rho$ is the Robin's function associated with  the domain $\Omega_\rho$.  We agree that
$G_0$ is the Green's function and $\tau_0$ is the Robin's function associated with  the domain $\Omega_0$. In particular, we observe that
$$G_0(\xi_i,\xi_j)=0\ \hbox{if}\ \xi_i\in\omega_i,\ \xi_j\in\omega_j,\ i\not=j\ \hbox{and}\ \tau_0(\xi_i)=\tau_{\omega_i}(\xi_i)\  \hbox{if}\ \xi_i\in\omega_i.$$ We introduce the matrix
$$M_0 (\boldsymbol\xi):=\left(\begin{matrix}
&\tau_{\omega_1}(\xi_1)&0&\dots&0\\
&0&\tau_{\omega_2}(\xi_2)&\dots&0\\
&\vdots&\vdots&\ddots&\vdots\\
&0&0&\dots&\tau_{\omega_k}(\xi_k)\\
\end{matrix}\right),\ \xi_1\in\omega_1,\dots,\xi_k\in\omega_k.$$

\begin{proposition}
Assume that 
\begin{equation}\label{mini} \min\limits_{\omega_1}\tau_{\omega_1}<\min\left\{\min\limits_{\omega_2}\tau_{\omega_2},\dots,\min\limits_{\omega_{k}}\tau_{\omega_k}\right\}.\end{equation}
Then if $\rho$ is small enough the first eigenvalue of the matrix $M_\rho$ has a stable minimum set.
\end{proposition}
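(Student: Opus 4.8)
The plan is to combine a domain-perturbation argument with perturbation theory for the lowest eigenvalue of $M_\rho(\boldsymbol\xi)$, so as to exhibit a stable critical set of $\Lambda_1(M_\rho)$ in the sense of Definition \ref{yy1}. First I would let $\rho\to0$: as the channels collapse, $G_\rho\to G_0$ and $\tau_\rho\to\tau_0$ in $C^1$ on compact subsets of $\Omega_0$ away from the diagonal, hence $M_\rho(\boldsymbol\xi)\to M_0(\boldsymbol\xi)$ and $\Lambda_1(M_\rho(\boldsymbol\xi))\to\Lambda_1(M_0(\boldsymbol\xi))=\min_i\tau_{\omega_i}(\xi_i)$ uniformly on compact sets, with $C^1$ convergence wherever this minimum is attained by a single index.

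By \eqref{mini}, the limiting function attains its minimum $m_1:=\min_{\omega_1}\tau_{\omega_1}$ precisely when $\xi_1$ belongs to the minimum set $\mathscr K_1$ of $\tau_{\omega_1}$, the remaining entries being arbitrary; in particular it is flat in $(\xi_2,\dots,\xi_k)$, and the whole mechanism of the proof is that for $\rho>0$ this degeneracy is broken, in a controlled way, by the off-diagonal coupling $G_\rho$. Quantitatively, on $R:=V_1\times\omega_2\times\dots\times\omega_k$, with $V_1$ a small neighbourhood of $\mathscr K_1$ in $\omega_1$, for $\rho$ small the entry $\tau_\rho(\xi_1)$ is strictly the smallest diagonal entry and strictly separated from the others, so the first eigenvalue of $M_\rho(\boldsymbol\xi)$ is simple with eigenvector close to $e_1$ and standard second-order eigenvalue perturbation gives, in $C^1(R)$,
\begin{equation*}
\Lambda_1(M_\rho(\boldsymbol\xi))=\tau_\rho(\xi_1)-\sum_{i=2}^{k}\frac{G_\rho(\xi_1,\xi_i)^2}{\tau_\rho(\xi_i)-\tau_\rho(\xi_1)}+\mathrm{h.o.t.},
\end{equation*}
the remainder being cubic in the small off-diagonal entries.

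The leading term pins $\xi_1$: indeed $\nabla_{\xi_1}\Lambda_1(M_\rho)=\nabla\tau_\rho(\xi_1)+\mathrm{l.o.t.}$ is a $C^1$-small perturbation of $\nabla\tau_{\omega_1}$, whose zero set $\mathscr K_1$ is stable (a strict local minimum). Assuming for simplicity that $\mathscr K_1$ is a non-degenerate minimum — as happens in the examples below, e.g. the centre of a ball — one applies Lemma \ref{isolated} with $x=(\xi_2,\dots,\xi_k)$, $y=\xi_1$, $\Phi_1=\nabla_{\xi_1}\Lambda_1(M_\rho)$ and $\Phi_2$ the gradient of $\Lambda_1(M_\rho)$ in the remaining variables: by the implicit function theorem $\Phi_1(x,\cdot)=0$ has, for $\rho$ small, a unique solution $\xi_1=\xi_1(x)$ near $\mathscr K_1$ with $D_{\xi_1}\Phi_1$ invertible, and the reduced map $\varphi(x)=\Phi_2(x,\xi_1(x))$ is the gradient of $x\mapsto\Lambda_1(M_\rho(\xi_1(x),x))$, which to leading order is a constant minus $\sum_{i\ge2}G_\rho(\xi_1(x),\xi_i)^2/(\tau_\rho(\xi_i)-\tau_\rho(\xi_1(x)))$, a sum of terms each depending essentially on the single variable $\xi_i$. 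If each such term has, in $\xi_i$, an extremum with non-zero local Brouwer degree, then by the product formula for the degree and its stability under the lower-order corrections $\mathtt{deg}(\varphi,\cdot,0)\ne0$, and Lemma \ref{isolated} produces a neighbourhood $\Xi$ of a set $\mathscr Z$ with $\mathtt{deg}(\nabla\Lambda_1(M_\rho),\Xi,0)\ne0$; since $\Lambda_1(M_\rho)\ge m_1/2>0$ there, $\mathscr Z$ is a stable minimum set. When $\mathscr K_1$ is only a stable critical set of $\tau_{\omega_1}$, the implicit function step is replaced by a homotopy that freezes $\xi_1$ inside the interaction terms, again with the product formula.

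The main obstacle is the hypothesis invoked in the previous step, that is, the fine asymptotics of $G_\rho$ and $\tau_\rho$ on the dumb-bell. Each interaction term $\xi_i\mapsto G_\rho(\xi_1,\xi_i)^2/(\tau_\rho(\xi_i)-\tau_\rho(\xi_1))$ is positive and vanishes on $\partial\Omega_\rho$, hence has an interior maximum; but, because $G_\rho(\xi_1,\cdot)$ restricted to $\omega_i$ behaves, after rescaling, like a multiple of the Poisson kernel of $\omega_i$ with pole at the channel-attachment point $q_i$, while $\tau_\rho\approx\tau_{\omega_i}$ there, this maximum sits in a boundary layer of size $\sim\rho$ around $q_i$. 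Locating it and establishing the non-vanishing of the local degree therefore requires a matched-asymptotics analysis near the channel mouths (the same phenomenon is presumably behind the restriction on the number of bubbles in the annulus example). Once this is in place, the eigenvalue expansion, the implicit function theorem and the degree computation are routine.
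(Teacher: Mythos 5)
Your route is genuinely different from the paper's, and it has the gap you yourself flagged at the end. You treat the flatness of the limiting eigenvalue $\Lambda_1(M_0)(\boldsymbol\xi)=\min_i\tau_{\omega_i}(\xi_i)$ in the variables $(\xi_2,\dots,\xi_k)$ as an obstruction to be removed: you resolve it by a second-order eigenvalue expansion in the off-diagonal entries $G_\rho$, then try to locate a genuine critical point via Lemma \ref{isolated}, which forces you to understand precisely where $\xi_i\mapsto G_\rho(\xi_1,\xi_i)^2/(\tau_\rho(\xi_i)-\tau_\rho(\xi_1))$ is extremised on $\omega_i$ and with what local degree. As you note, this requires boundary-layer asymptotics of $G_\rho$ and $\tau_\rho$ near the channel mouths, which you do not supply; and you restrict to the special case where the minimum set $\mathscr K_1$ of $\tau_{\omega_1}$ is nondegenerate, whereas the proposition makes no such assumption. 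So the proof is incomplete.

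The paper never removes the flatness, because it does not need to. Definition \ref{yy1} is formulated so that a strict local minimum \emph{set} — even an entire tube $\mathscr K=\mathscr K_1\times\omega_2\times\dots\times\omega_k$ along which the function is constant — is already a stable critical set. The paper therefore only verifies that $\mathscr K$ is a strict local minimum set of $\Lambda_1(M_0)$: on the open set $\Theta=\{\boldsymbol\xi\,:\,\tau_{\omega_1}(\xi_1)<\min_{j\geq 2}\tau_{\omega_j}(\xi_j)\}$ one has $\Lambda_1(M_0)=\tau_{\omega_1}(\xi_1)$, which equals $\min_{\omega_1}\tau_{\omega_1}$ exactly on $\mathscr K$ and is strictly larger elsewhere on $\Theta$, while on $\partial\Theta$ the hypothesis \eqref{mini} forces $\Lambda_1(M_0)>\min_{\omega_1}\tau_{\omega_1}$. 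The $C^1$-uniform convergence $M_\rho\to M_0$ from \cite[Lemma 3.2]{MP} — the only dumb-bell asymptotics required — then transfers this strict local minimum, hence the stable critical set, to $\Lambda_1(M_\rho)$ for $\rho$ small. No second-order perturbation theory, no implicit-function reduction, and no matched asymptotics at the channel mouths are used. The lesson is that the degree-theoretic notion of stability exists precisely to accommodate degenerate, flat minima; perturbing them into nondegenerate ones is unnecessary and, as you discovered, considerably harder.
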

\begin{proof}
First of all, we remind that (see  \cite[Lemma 3.2]{MP})
$$\lim_{\rho \to 0} \tau_{\rho}(\xi)= \tau_{0}(\xi)\quad \hbox{$C^1$-uniformly on compact sets of $\Omega_0$}
$$
and
$$\lim_{\rho \to 0} G_{\rho}(\xi,\eta) = G_{\omega_i}(\xi,\eta)\quad \hbox{$C^1-$ uniformly on compact sets of $\Omega_0\times\Omega_0\setminus\{\xi=\eta\},$}
$$
which imply  
\begin{equation}\label{mrho}\lim\limits_{\rho\to0} M_\rho(\boldsymbol\xi)=M_0 (\boldsymbol\xi)\ \hbox{
$C^1-$ uniformly on compact sets of $\omega_1\times\omega_k$.}\end{equation}
We will prove that the first eigenvalue $\Lambda_1(\boldsymbol\xi)$ of the matrix $M_0(\boldsymbol\xi)$ has a strict local minimum set in $\omega_1\times\dots\times\omega_k$. The claim will follow immediately by \eqref{mrho} and Definition \ref{yyl}.\\
Now, we observe that by \eqref{mini} the minimum value of $\Lambda_1$ is
$$\min\limits_{ \omega_1\times\dots\times\omega_k} \Lambda_1 =\min\limits_{\omega_1}\tau_{\omega_1}$$
and 
 is achieved on the set 
 $$\mathscr K:=\left\{(\xi_1,\xi_2,\dots,\xi_k)\in \omega_1\times\dots\times\omega_k\  :\ \min\limits_{\omega_1}\tau_{\omega_1}=\tau_{\omega_1}(\xi_1),\ \xi_i\in\omega_i,\ i=2,\dots,k\right\}.$$
Next,
we consider the open set
$$\Theta=\left\{\boldsymbol\xi=(\xi_1,\dots,\xi_k)\in \omega_1\times\dots\times\omega_k\ :\ \tau_{\omega_1}(\xi_1)<
\min\left\{\tau_{\omega_2}(\xi_2),\dots,\tau_{\omega_k}(\xi_k)\right\} \right\}.$$
Firstly, we point out that $\Theta$ is a neighbourhood of $\mathscr K,$ i.e. $\mathscr K\subset \Theta$. Indeed by 
$$\min\limits_{\omega_{i}}\tau_{\omega_i}\leq \tau_{\omega_i}(\xi_i)\ \hbox{for any}\ \xi_i\in\omega_i$$
we deduce
$$\min\left\{\min\limits_{\omega_2}\tau_{\omega_2},\dots,\min\limits_{\omega_{k}}\tau_{\omega_k}\right\}\leq \min\left\{\tau_{\omega_2}(\xi_2),\dots,\tau_{\omega_k}(\xi_k)\right\}\ \hbox{for any}\ \xi_i\in\omega_i.$$
Therefore if $(\xi_1,\xi_2,\dots,\xi_k)\in \mathscr K,$  since  $\tau_{\omega_1}(\xi_1)=\min\limits_{\omega_1}\tau_{\omega_1}$, by \eqref{mini}
$$\tau_{\omega_1}(\xi_1)=\min\limits_{\omega_1}\tau_{\omega_1}<\min\left\{\min\limits_{\omega_2}\tau_{\omega_2},\dots,\min\limits_{\omega_{k}}\tau_{\omega_k}\right\} \leq  \min\left\{\tau_{\omega_2}(\xi_2),\dots,\tau_{\omega_k}(\xi_k)\right\},$$
that is $(\xi_1,\xi_2,\dots,\xi_k)\in D.$
Next, we  observe that if $\boldsymbol\xi\in\partial \Theta$ by \eqref{mini}
$$\begin{aligned}\Lambda_1(\boldsymbol\xi)&= \min\limits_{i=1,\dots,k} \tau_{\omega_i}(\xi_i)=\tau_{\omega_1}(\xi_1)= \min\left\{\tau_{\omega_2}(\xi_2),\dots,\tau_{\omega_k}(\xi_k)\right\}\\
& \geq \min\left\{\min\limits_{\omega_2}\tau_{\omega_2},\dots,\min\limits_{\omega_{k}}\tau_{\omega_k}\right\}>\min\limits_{\omega_1}\tau_{\omega_1}.\end{aligned}$$
On the other hand, it is clear that
$$\min\limits_{\boldsymbol\xi\in  \Theta}\Lambda_1(\boldsymbol\xi)=\min\limits_{\omega_1}\tau_{\omega_1}=\min\limits_{ \omega_1\times\dots\times\omega_k} \Lambda_1.$$
Therefore, $\mathscr K$ is a strict local minimum set of $\Lambda_1$. That concludes the proof.
\end{proof}
\subsection{The  thin  annulus}
Let us consider  the annulus
\begin{equation*}
\Omega_{\rho} = \{ x \in \mathbb{R}^{4} : \rho<|x|<1 \},\ 0<\rho<1.
\end{equation*} 
Given an integer $k$, taking into account the symmetries of the annulus, 
  we can look for solutions which are even with respect to $x_3$ and $x_4$ and   invariant  with respect to a rotation of an angle $\frac{2\pi}{k}$
  in the $(x_1,x_2)-$plane (see \cite{dfm2,prey}) and concentrate at points
\begin{equation*}
\xi_{j}(r) =(re^{2\pi \mathtt i \frac{j-1}{k}},0) \in \mathbb{R}^{2} \times \mathbb{R}^{2} \ \ \ \ j=0, \cdots k-1.
\end{equation*}
 Define the matrix $$\mathtt{M}(r) = M(\xi(r))$$ where $\xi(r)=(\xi_1(r), \cdots, \xi_{k}(r))$ and denote by $\Lambda_{j}(r)$ the eigenvalues of $\mathtt{M}(r)$ being $\Lambda_{1}(r)$ the smallest one. It is clear that we are lead to find stable critical points of the one-variable function $r\to\Lambda_1(r),$ $r\in(\rho,1).$\\
We note that the matrix $\mathtt{M}(r)$ is a circulant symmetric matrix (see \cite{MS}), i.e. each column is obtained from the previous one by a rotation in the components:
\begin{center}
$A= \left( \begin{matrix}
a_0 & a_{k-1} & a_{k-2} & \cdots & a_2 & a_1 \\
a_1 & a_0 & a_{k-1} & \cdots & a_3 & a_2 \\
a_2 & a_1 & a_0 & \cdots & a_4 & a_3 \\
\vdots & \vdots & \vdots & & \vdots & \vdots \\
a_{k-1} & a_{k-2} & a_{k-3} & \cdots & a_1 & a_0 
\end{matrix} \right)$
\end{center}
Indeed,  the Robin function and   the Green function in an annulus are explicitly given in \cite{GV}, namely
\begin{equation}\label{Greenanello}
G_\rho(x, y):=\frac{1}{2\omega_3|x-y|^2}-\frac{1}{\omega_3}\sum_{m=0}^{\infty} Q_m(x, y) Z_m\left(\frac{x}{|x|}, \frac{y}{|y|}\right)\end{equation}
and
\begin{equation}\label{robinanello}
\tau_{\rho}(x):=\frac{1}{\omega_3}\sum_{m=0}^{\infty}d_mQ_m(|x|).\end{equation}
Here $d_m:=(m+1)^2$,
\begin{equation}\label{Qmxy}
Q_m(x, y):=\frac{\rho^{2m+2} - \rho^{2m+2}(|x|^{2m+2}+|y|^{2m+2}) + |x|^{2m+2}|y|^{2m+2}}{(2m+2)(|x||y|)^{m+2}(1-\rho^{2m+2}),}\end{equation}
\begin{equation}\label{Qmx}
Q_m(|x|):=\frac{\rho^{2m+2} - 2\rho^{2m+2}|x|^{2m+2} + |x|^{4m+4}}{(2m+2)|x|^{2m+4}(1-\rho^{2m+2})}\end{equation}
and
$Z_m(\zeta, \eta)$ represents the zonal harmonics of degree $m$ which have a particularly simple expression in terms of the Gegenbauer (or ultraspherical) polinomials $P_{m}^{\lambda}$. The latter can be defined in terms of generating functions. If we write (see \cite{SW} p. 148)
\begin{equation*}
(1-2rt+r^{2})^{-\lambda} = \sum_{m=0}^{\infty} P_m^{\lambda}(t)r^{m},
\end{equation*}
where $0 \leq r<1$, $|t| \leq 1$ and $\lambda>0$, then the coefficient $P_m^{\lambda}$ is called Gegenbauer polinomial of degree $m$ associated with $\lambda$. \ \\
Furthermore, from Theorem 2.1 of \cite{GV}, we have that for any $\zeta$ and $\eta$ such that $|\zeta|=1$ and $|\eta|=1$ it holds
\begin{equation*}
Z_{m}(\zeta,\eta) = (m+1)P_{m}^{1}(\zeta\cdot\eta).
\end{equation*}
It is also known that $$P_m^1(1)=(m+1),\quad P_m^1(-x)=(-1)^mP_m^1(x).$$
Moreover $Z_{m}$ is invariant by rotation.\\\\ Hence $$\tau_\rho(\xi_j(r))=\tau_\rho(\xi_1(r))$$ since 
$|\xi_j(r)|=r$ for all $j$. Moreover, we have that $G_\rho(\xi_{l}(r), \xi_{j}(r)) = G_\rho(\xi_{l+1}(r), \xi_{j+1}(r))$. \\ Hence we let
$$a_0 = \tau_\rho(\xi_{1}(r)),\quad\hbox{ and}\quad 
a_{j} = - G_\rho(\xi_{1}(r), \xi_{j+1}(r)),\,\ j=1, \cdots k-1.
$$
Moreover, it easily follows that $$a_{k-j}:=-G_\rho(\xi_{1}(r), \xi_{k-j}(r))=-G_\rho(\xi_{1}(r), \xi_j(r))$$
since $\xi_j(r):=r\left(\cos\frac{2\pi j}{k}, \sin\frac{2\pi j}{k}, 0, 0 \right)$ and $$\xi_{k-j}(r):=r\left(\cos\frac{2\pi (k-j)}{k}, \sin\frac{2\pi (k-j)j}{k}, 0, 0 \right)=r\left(\cos\frac{2\pi j}{k}, -\sin\frac{2\pi j}{k}, 0, 0 \right)$$ and the scalar product between $\xi_1(r)=r(1, 0, 0, 0)$ and $\xi_j$ and $\xi_{k-j}$ is equal.\\
Since $\mathtt M(r)$ is a circulant symmetric matrix, it is known that its eigenvalues  are  
\begin{equation*}
 \Lambda_{\ell}(r) = \sum_{j=0}^{k-1} a_j e^{\frac{2\pi \mathtt i}{k} j(\ell-1)}, \ \ell=1, \cdots, k.
\end{equation*}
We also remark that, since $\mathtt{M}(r)$ is symmetric, all the eigenvalues are reals. \ We claim that
$$\Lambda_1(r)=\tau_\rho(\xi_1(r))-\sum_{j=1}^{k-1}G_\rho(\xi_1(r), \xi_{j+1}(r)) $$
is simple.
Indeed
$$\begin{aligned} \Lambda_\ell(r)&=a_0 +\sum_{j=1}^{k-1}a_j e^{\frac{2\pi\mathtt i}{k}j(\ell-1)}=\tau_\rho(\xi_1(r))-\sum_{j=1}^{k-1}G_\rho(\xi_1(r), \xi_{j+1}(r))\mathtt Re\left(e^{\frac{2\pi\mathtt i}{k}j(\ell-1)}\right)\\
&>\tau_\rho(\xi_1(r))-\sum_{j=1}^{k-1}G_\rho(\xi_1(r), \xi_{j+1}(r))=\Lambda_1(r).\end{aligned}$$
It is immediate to check that for any $\rho>0$ there exists a strict minimum point $r(\rho)\in (\rho,1)$ of the function $\Lambda_1,$
since
$$\lim\limits_{r\to \rho}\Lambda_1(r)=\lim\limits_{r\to 1}\Lambda_1(r)=+\infty.$$
We only need to check that 
\begin{equation}\label{c1}\Lambda_1(r_\rho):=\min\limits_{r\in(\rho,1)}\Lambda_1(\rho)>0.\end{equation}
If \eqref{c1} is satisfied, by Theorem \ref{main}, we deduce that  the problem \eqref{pb4} has a positive solution which blow-up and concentrate at $k$ different points  in $\Omega_\rho.$ \\
On the other hand, we are able to prove that \eqref{c1} holds true only when $k=2$ and $k=4$ (see Proposition \ref{thmanello} below). However, we strongly believe that \eqref{c1} is satisfied for any integer $k$ and we also conjecture  that the annulus becomes increasingly thinner as 
$k$ grows, i.e.
\begin{equation}\label{CON}
 \begin{aligned}&\hbox{for any integer $k$ there exists $\rho_k\in(0,1)$ such that for any $\rho\in(\rho_k,1)$}\\
&\hbox{it holds true that }\  \min\limits_{r\in(\rho,1)}\Lambda_1(\rho)>0\ \hbox{and}\  \liminf\limits_{k\to\infty}\rho_k=1.\end{aligned}
\end{equation}
\begin{proposition}\label{thmanello} ${}$
\begin{itemize}
\item If $k=2$, then $\rho_2>\frac{1}{15}$.
\item If $k=4$  then  $\rho_4>\frac{5}{11}$
\end{itemize}\end{proposition}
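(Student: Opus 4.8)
\noindent\emph{Proof proposal.} Since $r\mapsto\Lambda_1(r)$ already has a strict interior minimiser $r(\rho)\in(\rho,1)$ (because $\Lambda_1\to+\infty$ at both endpoints of $(\rho,1)$), the only thing left to prove is the positivity \eqref{c1} of $\displaystyle\min_{r\in(\rho,1)}\Lambda_1(r)$ on the claimed range of $\rho$; once this is established, Theorem \ref{main} produces the announced multi-peak solutions. The plan is to verify \eqref{c1} by hand from the explicit series \eqref{Greenanello}, \eqref{robinanello}. The reason the computation closes exactly for $k=2$ and $k=4$ is that for those two values the configuration points $\xi_j(r)$ are pairwise \emph{orthogonal or antipodal}: the scalar products $\frac{\xi_i(r)}{|\xi_i(r)|}\cdot\frac{\xi_j(r)}{|\xi_j(r)|}$ take only the values $1,0,-1$, at which $Z_m=(m+1)P_m^1(\pm1)=(\pm1)^m(m+1)^2$ or $Z_m=(m+1)P_m^1(0)$ with $P_m^1(0)\in\{-1,0,1\}$, so the relevant series collapse to a sum of \emph{nonnegative} terms minus explicit Newtonian poles. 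Concretely I would (i) discard all but the $m=0$ mode in the ``antipodal'' contribution, (ii) control the remaining Green's functions by the maximum--principle bound $G_\rho(x,y)\leq\frac1{2\omega_3|x-y|^2}$, and (iii) reduce everything to a quadratic polynomial inequality in $t=r^2$ on $(\rho^2,1)$.

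\smallskip
\noindent\textbf{Case $k=2$.} Here $\xi_1(r)=(r,0,0,0)$ and $\xi_2(r)=-\xi_1(r)$, so $|\xi_1(r)-\xi_2(r)|=2r$ and $\frac{\xi_1(r)}{|\xi_1(r)|}\cdot\frac{\xi_2(r)}{|\xi_2(r)|}=-1$, hence $Z_m=(m+1)^2(-1)^m$; moreover $Q_m(\xi_1(r),\xi_2(r))=Q_m(r)$ because $|\xi_1(r)|=|\xi_2(r)|=r$ (compare \eqref{Qmxy} and \eqref{Qmx}). Plugging this into $\Lambda_1(r)=\tau_\rho(\xi_1(r))-G_\rho(\xi_1(r),\xi_2(r))$ via \eqref{Greenanello}--\eqref{robinanello} gives
$$\omega_3\,\Lambda_1(r)=\sum_{m\geq0}(m+1)^2\bigl(1+(-1)^m\bigr)Q_m(r)-\frac1{8r^2}=2\sum_{j\geq0}(2j+1)^2Q_{2j}(r)-\frac1{8r^2}.$$
Each $Q_m(r)>0$ on $(\rho,1)$ (the numerator in \eqref{Qmx} is a positive--definite quadratic in $r^{2m+2}$), so keeping only $j=0$ and using $Q_0(r)=\frac{\rho^2-2\rho^2r^2+r^4}{2r^4(1-\rho^2)}$,
$$\omega_3\,\Lambda_1(r)\ \geq\ 2Q_0(r)-\frac1{8r^2}\ =\ \frac{P(r^2)}{8r^4(1-\rho^2)},\qquad P(t):=8t^2-(15\rho^2+1)t+8\rho^2.$$
One checks $P(\rho^2)=7\rho^2(1-\rho^2)>0$, $P(1)=7(1-\rho^2)>0$, the vertex $t_\ast=\frac{15\rho^2+1}{16}$ lies in $(\rho^2,1)$, and $P(t_\ast)=\frac{(225\rho^2-1)(1-\rho^2)}{32}$; hence $\min_{[\rho^2,1]}P>0$ precisely when $225\rho^2>1$, i.e.\ $\rho>\frac1{15}$, which is the first assertion.

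\smallskip
\noindent\textbf{Case $k=4$.} Now $\xi_1(r)=(r,0,0,0)$, $\xi_3(r)=-\xi_1(r)$, $\xi_2(r)=(0,r,0,0)$ and $\xi_4(r)=(0,-r,0,0)$: thus $\xi_3(r)$ is the antipode of $\xi_1(r)$, while $\xi_2(r),\xi_4(r)\perp\xi_1(r)$ with $|\xi_1(r)-\xi_2(r)|^2=|\xi_1(r)-\xi_4(r)|^2=2r^2$. Write
$$\Lambda_1(r)=\Bigl[\tau_\rho(\xi_1(r))-G_\rho(\xi_1(r),\xi_3(r))\Bigr]-G_\rho(\xi_1(r),\xi_2(r))-G_\rho(\xi_1(r),\xi_4(r)).$$
The bracket is exactly the quantity from Case $k=2$, so $\omega_3$ times it is $\geq 2Q_0(r)-\frac1{8r^2}$; the remaining two Green's functions satisfy, by the maximum principle, $G_\rho(\xi_1(r),\xi_2(r)),\,G_\rho(\xi_1(r),\xi_4(r))\leq\frac1{2\omega_3\cdot2r^2}=\frac1{4\omega_3r^2}$. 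Therefore
$$\omega_3\,\Lambda_1(r)\ \geq\ 2Q_0(r)-\frac1{8r^2}-\frac1{4r^2}-\frac1{4r^2}\ =\ \frac{R(r^2)}{8r^4(1-\rho^2)},\qquad R(t):=8t^2-(11\rho^2+5)t+8\rho^2,$$
and, exactly as above, $R(\rho^2)=3\rho^2(1-\rho^2)>0$, $R(1)=3(1-\rho^2)>0$, $t_\ast=\frac{11\rho^2+5}{16}\in(\rho^2,1)$, $R(t_\ast)=\frac{(121\rho^2-25)(1-\rho^2)}{32}$, so $\min_{[\rho^2,1]}R>0$ iff $121\rho^2>25$, i.e.\ $\rho>\frac5{11}$, which is the second assertion.

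\smallskip
\noindent\textbf{Where the difficulty lies.} No individual step is deep: once $\Lambda_1$ has been written out through the circulant structure and the zonal harmonics evaluated, everything becomes the minimisation of an explicit quadratic on an interval. The delicate point is calibrating the estimates — retaining the full (positive) ``antipodal'' series costs nothing at leading order, while the maximum--principle bound on the \emph{transversal} Green's functions is precisely what keeps the final threshold strictly below $1$; a cruder bound on the antipodal part, or a weaker bound on the transversal ones, would push the threshold above $1$ and give nothing. The genuine limitation is that only for $k\in\{2,4\}$ do the mutual angles of the $\xi_j(r)$ fall in $\{0,\pi\}$, making the series explicit and manifestly nonnegative; for $k=3$ or $k\geq5$ one would have to control the Gegenbauer values $P_m^1(t)$ at $t\notin\{0,\pm1\}$, which the present argument does not attempt, and which is why only these two cases are treated even though \eqref{CON} is expected to hold for every $k$.
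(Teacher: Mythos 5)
Your proof is correct and follows the same strategy as the paper's: lower-bound $\Lambda_1(r)$ by keeping only the $m=0$ term of the series coming from the antipodal Green's function (the remaining even-$m$ terms have nonnegative coefficients $(1+(-1)^m)d_m$ and $Q_m\geq0$), control the ``transversal'' Green's functions by $\frac{1}{4\omega_3 r^2}$, and reduce $\Lambda_1>0$ to the positivity of an explicit quadratic $P(t)$ (resp.\ $R(t)$) in $t=r^2$ on $(\rho^2,1)$. Your computations of $P$, $R$, their endpoint values, vertices, and discriminant factorisations all check out, and the thresholds $\rho>\tfrac1{15}$, $\rho>\tfrac5{11}$ agree with the paper.

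One point where your write-up is actually \emph{more careful} than the paper's: for $k=4$ the paper asserts the exact identities $G_\rho(\xi_1,\xi_2)=G_\rho(\xi_1,\xi_4)=\frac{1}{4\omega_3 r^2}$ on the grounds that $Z_m\bigl(\frac{\xi_1}{|\xi_1|},\frac{\xi_2}{|\xi_2|}\bigr)=0$. That vanishing is false for even $m$: at orthogonal directions $\zeta\cdot\eta=0$ one has $P_m^1(0)=(-1)^{m/2}$ for even $m$ (and $0$ for odd $m$), so $Z_m=(m+1)P_m^1(0)\not\equiv0$, and $G_\rho(\xi_1,\xi_2)$ is strictly less than the free-space value $\frac{1}{4\omega_3 r^2}$. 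You sidestep this by invoking the maximum principle $G_\rho(x,y)\leq\frac{1}{2\omega_3|x-y|^2}$, which yields exactly the one-sided bound that is actually needed; the paper's conclusion survives because its (incorrect) equality overestimates $G_\rho$, hence underestimates $\Lambda_1$, which is harmless for a positivity claim, but your formulation is the rigorous one. Your closing remark on why the argument is confined to $k\in\{2,4\}$ (mutual angles in $\{0,\pi/2,\pi\}$, so $P_m^1$ need only be evaluated at $\pm1,0$) is a genuine addition not spelled out in the paper and correctly identifies the obstruction to extending this elementary computation.
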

\begin{proof}
If $k=2$
$$\Lambda_1(r)=\tau_\rho(\xi_1(r))-G_\rho(\xi_1(r), \xi_{2}(r))$$  with
 $\xi_1(r)=r(1, 0, 0, 0)$ while $\xi_2(r)=r(-1, 0, 0, 0)$.\\ 
By using the explicit formula for the Green function in the annulus $\Omega_{a}$ (see \eqref{Greenanello}) and the explicit formula for the Robin function (see \eqref{robinanello}) we get
\begin{equation*}
\tau_\rho(\xi_1(r)) = \frac{1}{\omega_3} \sum_{m=0}^{\infty} d_{m} Q_{m}(r).
\end{equation*}
Instead
\begin{equation*}
G_\rho(\xi_1(r),\xi_2(r)) = \frac{1}{\omega_3} \left[ \frac{1}{8r^{2}} - \sum_{m=0}^{\infty} Q_{m}(r) Z_{m}\left(\frac{\xi_1}{|\xi_1|}, \frac{\xi_2}{|\xi_2|}\right) \right].
\end{equation*}
We get that
\begin{equation*}
Z_{m}\left(\frac{\xi_1}{|\xi_1|}, \frac{\xi_2}{|\xi_2|}\right) = (m+1)P_{m}^{1}(-1)=(-1)^{m}(m+1)P_{m}^{1}(1)=(-1)^{m}(m+1)^{2}=(-1)^m d_m;
\end{equation*}
so
\begin{equation*}
G_\rho(\xi_1(r),\xi_2(r)) = \frac{1}{\omega_3} \left[ \frac{1}{8r^{2}} - \sum_{m=0}^{\infty} (-1)^{m} d_m Q_{m}(r) \right].
\end{equation*}
Notice that $Q_{m}(r)$ is nonnegative for all $m \geq 0$, and therefore,
\begin{equation*}
\begin{aligned}
\tau_\rho(\xi_1(r))-G_\rho(\xi_1(r),\xi_2(r)) & = \frac{1}{\omega_3} \left[ -\frac{1}{8r^{2}} + \sum_{m=0}^{\infty} (d_m+ (-1)^{m} d_m) Q_{m}(r) \right]
\\& = \frac{1}{\omega_3} \left[ -\frac{1}{8r^{2}} + 2Q_{0}(r) + \sum_{m=1}^{\infty} (d_m+ (-1)^{m} d_m) Q_{m}(r) \right]
\\& \geq \frac{1}{\omega_3} \left[ - \frac{1}{8r^{2}} + 2Q_{0}(r) \right].\end{aligned}
\end{equation*}
A sufficient condition to have \eqref{c1} is 
\begin{equation*}
\frac{8\rho^2 -16\rho^2r^2+8r^4}{r^{4}(1-\rho^2)} > \frac{1}{r^2}, \   \forall\ r \in (\rho,1),
\end{equation*}
that is $\rho \in (\frac{1}{15}, 1)$.\\ 
If $k=4$
$$\Lambda_1(r):=\tau_\rho(\xi_1(r))-\sum_{j=1}^3 G_\rho(\xi_1(r), \xi_{j+1}(r))$$ with $$\xi_1(r):=r(1, 0, 0, 0),\,\, \xi_2(r):=r(0, 1, 0, 0), \,\, \xi_3(r):=r(-1, 0, 0, 0),\,\, \xi_4(r):=r (0, -1, 0, 0).$$ 
We also have
$$G_\rho(\xi_1(r), \xi_2(r)):=\frac{1}{4\omega_3 r^2},\quad G_\rho(\xi_1(r), \xi_4(r)):=\frac{1}{4\omega_3 r^2}$$
since $Z_m\left(\frac{\xi_1}{|\xi_1|},\frac{\xi_2}{|\xi_2|}\right)=0$ and $Z_m\left(\frac{\xi_1}{|\xi_1|},\frac{\xi_4}{|\xi_4|}\right)=0$.\\
Moreover arguing as above we get
$$G_\rho(\xi_1(r), \xi_3(r)):=\frac{1}{\omega_3}\left[\frac{1}{8r^2}-\sum_{m=0}^\infty (-1)^m d_m Q_m(r)\right].$$ 
Then
\begin{equation*}
\begin{aligned}
\Lambda_1(r) & = \frac{1}{\omega_3} \left[ -\frac{5}{8r^{2}} + \sum_{m=0}^{\infty} (d_m+ (-1)^{m} d_m) Q_{m}(r) \right]
\\& \geq \frac{1}{\omega_3} \left[ - \frac{5}{8r^{2}} + 2Q_{0}(r) \right].\end{aligned}
\end{equation*}
A sufficient condition to have \eqref{c1} is 
\begin{equation*}
\frac{8\rho^2 -16\rho^2r^2+8r^4}{r^{4}(1-\rho^2)} > \frac{5}{r^2}, \ \forall\ r \in (\rho,1),
\end{equation*}
that is $\rho \in (\frac{5}{11}, 1)$.
\end{proof}
\appendix
\section{Some estimates} \label{app}
\subsection{The estimate of the error}
First of all, we give an estimate of the error term $ \mathcal{E}_{\pmb{\d}, \pmb{\xi}}$.
\begin{lemma}
For any $\rho>0$ small enough there exist $\varepsilon_{0}>0$ and $C>0$ such that for any $\xi \in D_\rho$ and for any $\varepsilon, \delta_i \in (0,\varepsilon_{0})$ it holds 
\begin{equation*}
\| \mathcal{E}_{\pmb{\d}, \pmb{\xi}} \| \leq C\( |\pmb{\delta}|^{2}+\e|\pmb{\d}|\).
\end{equation*}
\end{lemma}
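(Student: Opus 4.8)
The plan is to rewrite $\mathcal{E}_{\pmb{\delta},\pmb{\xi}}$ by means of the defining property of the projected bubbles and then reduce the estimate to a handful of scaling computations. Since $-\Delta(P\Ui)=\Ui^{3}$ in $\Omega$ with $P\Ui=0$ on $\partial\Omega$, the definition of $i^{*}_{\Omega}$ gives $P\Ui=i^{*}_{\Omega}(\Ui^{3})$, hence $W_{\pmb{\delta},\pmb{\xi}}=i^{*}_{\Omega}\(\sum_{i=1}^{k}\Ui^{3}\)$ and, by \eqref{30},
\begin{equation*}
\mathcal{E}_{\pmb{\delta},\pmb{\xi}}=i^{*}_{\Omega}\(W_{\pmb{\delta},\pmb{\xi}}^{3}-\sum_{i=1}^{k}\Ui^{3}+\e\,W_{\pmb{\delta},\pmb{\xi}}\).
\end{equation*}
Using $\norm{i^{*}_{\Omega}(f)}\leq S^{-1}\norm{f}_{4/3}$, it then suffices to prove, with constants uniform in $\pmb{\xi}\in D_\rho$,
\begin{equation*}
\norm{W_{\pmb{\delta},\pmb{\xi}}^{3}-\sum_{i=1}^{k}\Ui^{3}}_{4/3}\leq C|\pmb{\delta}|^{2}\qquad\text{and}\qquad\norm{W_{\pmb{\delta},\pmb{\xi}}}_{4/3}\leq C|\pmb{\delta}|.
\end{equation*}

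The second bound is the easy one. By the maximum principle $0<P\Ui<\Ui$ in $\Omega$, so $\norm{W_{\pmb{\delta},\pmb{\xi}}}_{4/3}\leq\sum_{i=1}^{k}\norm{\Ui}_{4/3}$, and the change of variables $x=\xi_i+\delta_i y$ shows $\int_{\Omega}\Ui^{4/3}=\bigo{\delta_i^{4/3}}$ (here boundedness of $\Omega$ is used, the integral over $\R^4$ being divergent), hence $\norm{\Ui}_{4/3}\leq C\delta_i$. The same computation gives $\int_{\Omega}\Ui^{8/3}=\bigo{\delta_i^{4/3}}$ — now the integral converges on all of $\R^4$ — so that $\norm{\Ui^{2}}_{4/3}\leq C\delta_i$ as well, a bound I will use below. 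Thus $\e\norm{W_{\pmb{\delta},\pmb{\xi}}}_{4/3}\leq C\e|\pmb{\delta}|$.

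For the first bound I would split
\begin{equation*}
W_{\pmb{\delta},\pmb{\xi}}^{3}-\sum_{i=1}^{k}\Ui^{3}=\Big[\(\sum_{i=1}^{k}P\Ui\)^{3}-\sum_{i=1}^{k}(P\Ui)^{3}\Big]+\sum_{i=1}^{k}\Big[(P\Ui)^{3}-\Ui^{3}\Big].
\end{equation*}
Expanding the cube, the first bracket is a finite sum of interaction terms $(P\Ui)^{2}P\Uj$ with $i\neq j$ and $P\Ui P\Uj P\Ul$ with $i,j,l$ distinct; bounding $P\Ui\leq\Ui$ everywhere and using that $\pmb{\xi}\in D_\rho$ — so the bubbles are pairwise $2\rho$-separated and stay at distance $2\rho$ from $\partial\Omega$ — the classical interaction estimates give $\norm{\Ui^{2}\Uj}_{4/3}\leq C\delta_i\delta_j$ and $\norm{\Ui\Uj\Ul}_{4/3}\leq C\delta_i\delta_j\delta_l$, both $\bigo{|\pmb{\delta}|^{2}}$. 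For the second bracket, $\Ui-P\Ui$ is harmonic in $\Omega$ with boundary trace $\Ui|_{\partial\Omega}$, which is $\bigo{\delta_i}$ because $\dist(\xi_i,\partial\Omega)\geq2\rho$ (this is also the content of \eqref{exp}); by the maximum principle $\norm{\Ui-P\Ui}_{\infty}\leq C\delta_i$, and since $0<P\Ui<\Ui$,
\begin{equation*}
\abs{(P\Ui)^{3}-\Ui^{3}}\leq 3\,\abs{\Ui-P\Ui}\,\Ui^{2}\leq C\delta_i\,\Ui^{2},
\end{equation*}
so $\norm{(P\Ui)^{3}-\Ui^{3}}_{4/3}\leq C\delta_i\norm{\Ui^{2}}_{4/3}\leq C\delta_i^{2}$. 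Summing the finitely many contributions yields $\norm{W_{\pmb{\delta},\pmb{\xi}}^{3}-\sum_i\Ui^{3}}_{4/3}\leq C|\pmb{\delta}|^{2}$, which together with the second bound proves the lemma.

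The only genuinely delicate point is the interaction estimate $\norm{\Ui^{2}\Uj}_{4/3}\leq C\delta_i\delta_j$ for $i\neq j$ in the four-dimensional setting: one obtains it by splitting $\Omega$ into a neighbourhood of $\xi_i$, a neighbourhood of $\xi_j$ and the complement, on each piece freezing the \emph{far} bubble at its size $\bigo{\delta/\rho^{2}}$ there and integrating the explicit profile of the \emph{near} bubble — the dominant contribution, coming from the region around $\xi_i$, is $\sim\delta_j\norm{\Ui^{2}}_{4/3}\sim\delta_i\delta_j$; the separation encoded in $D_\rho$ is precisely what prevents a logarithmic loss in $\delta$ at this step. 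All the remaining ingredients — the scaling bounds for $\norm{\Ui}_{4/3}$ and $\norm{\Ui^{2}}_{4/3}$ and the $L^{\infty}$ control of $\Ui-P\Ui$ — are routine and uniform on $D_\rho$; they are carried out, for the higher-dimensional analogue, in \cite{R,MP,PR}.
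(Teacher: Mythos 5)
Your proof is correct and follows essentially the same route as the paper's: the same rewriting $W_{\pmb{\d},\pmb{\xi}}=i^*_\Omega\(\sum_i\Ui^3\)$, the same reduction to $L^{4/3}$ bounds, the same decomposition of $W^3-\sum_i\Ui^3$ into interaction terms $(P\Ui)^2P\Uj$, $P\Ui P\Uj P\Ul$ and individual terms $(P\Ui)^3-\Ui^3$, and the same three-region splitting for the interactions. The only (immaterial) difference is that for $(P\Ui)^3-\Ui^3$ you use the pointwise bound $\|\Ui-P\Ui\|_\infty\lesssim\delta_i$ with the factorization $|a^3-b^3|\le 3|a-b|\max(a,b)^2$, whereas the paper expands via \eqref{exp}; both yield $\mathcal O(\delta_i^2)$.
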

\begin{proof}  
First of all, we remark that
$W_{\pmb{\d},\pmb{\xi}}={i}^*_\Omega\left(\sum\limits_{i=1}^k  (\Ui)^3\right).$ Then by a straightforward computation  
		\begin{align*}
\|\mathcal{E}_{\pmb{\d}, \pmb{\xi}} \| &\lesssim\left( \| (\sum_i  P\Ui)^3-\sum_i (\Ui)^3\|_{\frac 43} + \e\sum_i \|P\Ui\|_{\frac 43}\right)\\
&\lesssim  \sum_i \|(P\Ui)^3-(\Ui)^3\|_{\frac 43} + \sum_{j<i} \| (P\Ui)^2 P\Uj \|_{{\frac 43}}\\&+\sum_{j< i<h} \| P\Ui P\Uj P\Uh\|_{{\frac 43}}  + \e\sum_i \|P\Ui\|_{\frac 43}.
		\end{align*}
Now
	\begin{align*}
		\|(P\Ui)^3-(\Ui)^3\| &\lesssim \left(\d_i \|(\Ui)^2\|_{\frac 43} + \d_i^2\|\Ui\|_{\frac 43} + \d_i^3\right) \lesssim \d_i^2 
	\end{align*}
and $\|P\Ui\|_{\frac 43}= \O(\d_i)$.
Now for all $i \neq j$ $$\begin{aligned}
	\int_\Omega |(P\Ui)^2 P\Uj|^{4/3} &\lesssim \left(\d_j^{4/3} \int\limits_{ B(\xi_i,\rho)} |P\Ui|^{8/3} + \d_i^{8/3} \int\limits_{ B(\xi_j,\rho)} |P\Uj|^{4/3} + {\d_i^2\d_j}^{\frac 43} \right) \\&\lesssim c\left(\d_j^{4/3} \int\limits_{ B(\xi_i,\rho)} (\Ui+\O(\d_i))^{8/3} + \d_i^{8/3} \int\limits_{ B(\xi_j,\rho)} (\Uj+\O(\d_j))^{4/3} \right)\\&\lesssim \left(\d_i\d_j\right)^{4/3}
\end{aligned}$$
and similarly, for $i\neq h, i\neq j, h\neq j$ we have \begin{align*}
	\int_\Omega \left( P\Ui P\Uh P\Uj \right)^{4/3} \lesssim (\d_i\d_h\d_k)^{4/3} .
\end{align*} 
	\end{proof}

\subsection{Proof of Proposition \ref{propro}}

\begin{lemma}\label{lemma4.1}
If for every $h=1, \ldots, k$
\begin{equation}\label{cond1}
\int_\Omega \left(-\Delta u_\e -f(u_\e)-\e u_\e\right)\psi^0_{\d_h, \xi_h}\, dx=0
\end{equation}
and for some $\eta_h>0$ and for $l=1,2,3,4,$
\begin{equation}\label{cond2}
\int_{B(\xi_h, \eta_h)} \left(-\Delta u_\e -f(u_\e)-\e u_\e\right)\partial_l u_\e\, dx=0
\end{equation}
then $\mathfrak c^j_i=0$ for every $j=0, 1, \ldots, 4$ and for every $i=1, \ldots, k$.
\end{lemma}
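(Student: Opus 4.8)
The plan is to turn the two families of orthogonality conditions \eqref{cond1} and \eqref{cond2} into a homogeneous square linear system for the multipliers $\mathfrak c^j_i$, and to show that, for $\e$ and $|\pmb{\d}|$ small, its coefficient matrix is a small perturbation of a diagonal, invertible one. First, combining \eqref{r1} with $f'(s)=3(s^+)^2$ and the defining relation $-\Delta P\psi^j_{\d_i,\xi_i}=3\,\Ui^2\psi^j_{\d_i,\xi_i}$ in $\Omega$, one obtains the identity
$$-\Delta u_\e-f(u_\e)-\e u_\e=\sum_{i=1}^{k}\sum_{j=0}^{4}3\,\mathfrak c^j_i\,\Ui^2\,\psi^j_{\d_i,\xi_i}\qquad\text{in }\Omega.$$
Inserting it into \eqref{cond1} and into \eqref{cond2} (we take each $\eta_h\in(0,\rho)$, so that, since $\pmb{\xi}\in D_\rho$, the balls $B(\xi_h,\eta_h)$ are pairwise disjoint and compactly contained in $\Omega$) turns the hypotheses into the homogeneous system
$$\sum_{i,j}\mathfrak c^j_i\int_\Omega 3\,\Ui^2\psi^j_{\d_i,\xi_i}\,\psi^0_{\d_h,\xi_h}=0,\qquad \sum_{i,j}\mathfrak c^j_i\,\d_h\int_{B(\xi_h,\eta_h)} 3\,\Ui^2\psi^j_{\d_i,\xi_i}\,\partial_l u_\e=0,$$
for $h=1,\dots,k$ (first equation) and for $h=1,\dots,k$, $l=1,\dots,4$ (second equation), the factor $\d_h$ being a harmless normalisation of the rows. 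It then suffices to prove that the resulting $(5k)\times(5k)$ coefficient matrix $\mathbb A$ is invertible once $\e$ and the $\d_i$'s are small; the homogeneity then forces $\mathfrak c^j_i=0$ for all $i,j$.

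The next step is to estimate the entries of $\mathbb A$. Here I would use: the scale invariance $\int_\Omega 3\,\Ui^2\psi^j_{\d_i,\xi_i}\psi^{j'}_{\d_i,\xi_i}=\int_{\R^4}3\,\Ua^2\psi^j\psi^{j'}+o(1)$, together with the parity relations $\int_{\R^4}3\,\Ua^2\psi^j\psi^{j'}=0$ for $j\neq j'$, $\int_{\R^4}3\,\Ua^2(\psi^0)^2=:\gamma_0>0$, $\int_{\R^4}3\,\Ua^2(\psi^l)^2=:\gamma_1>0$; the elementary identity $\partial_{x_l}\mathcal U_{\d,\xi}=\d^{-1}\psi^l_{\d,\xi}$, which by \eqref{soluzione}, \eqref{exp} and \eqref{phi} gives $\partial_l u_\e=\d_h^{-1}\psi^l_{\d_h,\xi_h}+r_h$ on $B(\xi_h,\eta_h)$ with $\|r_h\|_{L^2(B(\xi_h,\eta_h))}=o(1)$ uniformly; and the size bounds $\|\Ui^2\psi^j_{\d_i,\xi_i}\|_{L^2(\Omega)}\lesssim\d_i^{-1}$, $\|\Ui^2\psi^j_{\d_i,\xi_i}\|_{L^1(\Omega)}\lesssim\d_i$, together with the fact that $\Ui^2\psi^j_{\d_i,\xi_i}$ concentrates, up to $o(1)$ in these norms, in an arbitrarily small neighbourhood of $\xi_i$. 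Putting these together I expect: the diagonal entry of the $\psi^0$-row $h$ against the unknown $\mathfrak c^0_h$ equals $\gamma_0+o(1)$, and that of the $\partial_l$-row $(h,l)$ against $\mathfrak c^l_h$ equals $\gamma_1+o(1)$; every same-centre off-diagonal entry (column index $i=h$, but column label $j$ differing from the one singled out by the row) vanishes to leading order by parity; every different-centre entry ($i\neq h$) is $o(1)$, since $\dist(\xi_i,B(\xi_h,\eta_h))\geq\rho$; and the contribution of the remainder $r_h$ — in particular of $\partial_l\phi$ — to the entries of the $\partial_l$-rows is $\lesssim\d_h\,\d_i^{-1}\|\phi\|\lesssim\d_h\,\d_i^{-1}(|\pmb{\d}|^2+\e|\pmb{\d}|)=o(1)$, by Cauchy--Schwarz and \eqref{phi}, using $\d_i\simeq\d_h\simeq|\pmb{\d}|$ from \eqref{delta}. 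All the remainders are uniform in $(\pmb{\xi},\mathbf d,\lambda)$ on compact sets, exactly as in Proposition \ref{propro}.

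Granting these, $\mathbb A=\mathrm{diag}\bigl(\gamma_0\,\mathtt{Id}_k,\ \gamma_1\,\mathtt{Id}_{4k}\bigr)+o(1)$ as $\e,|\pmb{\d}|\to0$, so $\mathbb A$ is invertible provided $\e$ and the $\d_i$'s stay below some $\e_0>0$; since the system is homogeneous, $\mathfrak c^j_i=0$ for all $j=0,\dots,4$ and $i=1,\dots,k$, which is the assertion. I expect the main obstacle to be the bookkeeping in the second step: one must verify that, after normalising the rows by $\d_h$, the diagonal entries stay of order one while \emph{every} other contribution is genuinely $o(1)$, uniformly — most delicately the $\phi$-term, for which only the energy bound \eqref{phi} is available, so that the sharp $L^2$-size $\d_i^{-1}$ of $\Ui^2\psi^j_{\d_i,\xi_i}$ (rather than a crude estimate) must be used, and the different-centre interactions, whose smallness rests on $\dist(\xi_i,\xi_h)\geq 2\rho$ and on keeping the $\eta_h$ small.
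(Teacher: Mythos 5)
Your proposal is correct and follows essentially the same route as the paper: both reduce \eqref{cond1}–\eqref{cond2} to a $5k\times 5k$ homogeneous linear system in the $\mathfrak c^j_i$ and show its coefficient matrix is invertible because it is, up to $o(1)$ errors, diagonal. The only cosmetic differences are that you rescale the $\partial_l$-rows by $\d_h$ to make all diagonal entries $O(1)$, whereas the paper keeps the $1/\d_i$ diagonal entry and invokes diagonal dominance directly, with slightly more explicit orders for the off-diagonal remainders.
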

\begin{proof}
From \eqref{cond1} and \eqref{cond2} we have that, for all $h=1, \cdots, k$ and for $l=0, \cdots, 4$
\begin{equation*}
\sum_{i=1, \cdots, k  \atop j=0, \cdots, 4}  \mathfrak c^j_i \int_{\Omega} (\Ui)^{2} \psi^{j}_{i} \psi^{0}_{h} = \sum_{i=1, \cdots, k  \atop j=0, \cdots, 4} \mathfrak c^j_i  \int_{B(\xi_{h}, \eta_{h})} (\Ui)^{2} \psi^{j}_{i} \partial_{l} u_{\e} = 0 
\end{equation*}
and the linear system in the $ \mathfrak c^j_i$'s diagonally dominant. Indeed
\begin{equation}\label{Sprimo}
\int_{\Omega} (\Ui)^{2} \psi^{j}_{i} \psi^{0}_{h} = 
\begin{cases}
a + \mathcal{O}(\d_i^4) \ \ \ \ \ \ \ \ \ \ \ \ \ \ h=i \ \mbox{and} \ j=0, \\
\mathcal{O}(\d_i^5) \ \ \ \ \ \ \ \ \ \ \ \ \ \ \ \ \ \ \ \ \ h=i \ \mbox{and} \ j \neq 0, \\
\mathcal{O}(|\pmb \d|^2) \ \ \ \ \ \ \ \ \ \ \ \ \ \ \ \ \ \ \ h\neq i, \forall j,
\end{cases}
\end{equation}
for some constant $a \neq 0$.
Moreover
\begin{equation*}
\int_{B(\xi_{h}, \eta_{h})} (\Ui)^{2} \psi^{j}_{i} \partial_{l} u_{\e} =
\begin{cases}
\frac{1}{\d_i} b + \mathcal{O}(\e) \ \ \ \ \ \ \ \ \ \ \ \  h=i \ \mbox{and} \ j=l, \\
\mathcal{O}(\e) \ \ \ \ \ \ \ \ \ \ \ \ \ \ \ \ \ \ \ \ \ h=i \ \mbox{and} \ j \neq l, \\
\mathcal{O}(|\pmb \d|^3) \ \ \ \ \ \ \ \ \ \ \ \ \ \ \ \ \ h\neq i, j =0, \\
\mathcal{O}(|\pmb \d|^4) \ \ \ \ \ \ \ \ \ \ \ \ \ \ \ \ \ h\neq i, j =1, \cdots, 4,
\end{cases}
\end{equation*}
for some constant $b \neq 0$, because by \eqref{soluzione} and \eqref{phi},
\begin{equation*}
\int_{B(\xi_{h}, \eta_{h})} (\Ui)^{2} \psi^{j}_{i} \partial_{l} \phi=
\begin{cases}
\underbrace{\mathcal{O}(\| \phi \|) \left(\int_{B(\xi_{h}, \eta_{h})} |(\Ui)^{2} \psi^{j}_{i}|^{2} \right)^{\frac{1}{2}}}_{\mathcal{O}\left(\e \d_i \frac{1}{\d_i} \right)} = \mathcal{O}(\e) \hspace{1 cm} h=i, \ j=0, \cdots, 4, \\
\mathcal{O}(\e|\pmb \d|^4), \hspace{6.5 cm}  h \neq i, \ j=0,\\
\mathcal{O}(\e|\pmb \d|^5), \hspace{6.5 cm} h \neq i, \ j=1, \cdots, 4,
\end{cases}
\end{equation*}
and
\begin{equation*}
\int_{B(\xi_{h}, \eta_{h})} (\Ui)^{2} \psi^{j}_{i} \partial_{l} P\Uh =
\begin{cases}
\frac{1}{\d_i} b + \mathcal{O}(\d_i^2) 	\hspace{3.4 cm} h=i, \ l=j, \\
\mathcal{O}(\d_i^2), 	\hspace{4.2 cm}  h=i, l \neq j,\\
\mathcal{O}(|\pmb \d|^3),  	\hspace{4 cm}  h \neq i, j=0, \\
\mathcal{O}(|\pmb \d|^4), 	\hspace{4 cm}  h \neq i, j=1, \cdots, 4.
\end{cases}
\end{equation*}
\end{proof}
We next establish the main terms of \eqref{cond1} and \eqref{cond2}.\\
\begin{proposition}\label{espansione1}
For any $\rho>0$ small enough 
\begin{equation}\label{sys0}\begin{aligned}
&\int_\Omega \left(-\Delta u_\e -f(u_\e)-\e u_\e\right)\psi^0_{\d_h, \xi_h}\, dx\\ & \qquad=\mathfrak C^2\left[\delta_h^2\tau_\Omega(\xi_h)-\sum_{i\neq h}\d_i\d_h G(\xi_i, \xi_h)+\frac{1}{4\omega}\e \d_h^2\ln\d_h+o(|\pmb\d|^2)\right]\end{aligned}
\end{equation} as $\e \to 0$, uniformly  with respect to $(\boldsymbol\xi, \mathbf{d}, \lambda)$ in compact sets of $D_\rho \times (0, +\infty)^{k-1}\times (0, +\infty).$
\end{proposition}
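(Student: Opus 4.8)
The quantity to expand is $\int_\Omega(-\Delta u_\e-f(u_\e)-\e u_\e)\psi^0_{\d_h,\xi_h}$ where $u_\e=W_{\pmb\d,\pmb\xi}+\phi_\e$. The first step is to use the fact that $-\Delta\psi^0_{\d_h,\xi_h}=3\U_{\d_h,\xi_h}^2\psi^0_{\d_h,\xi_h}$ in $\R^4$, so that after integrating by parts the term $\int_\Omega(-\Delta u_\e)\psi^0_{\d_h,\xi_h}=\int_\Omega\nabla u_\e\cdot\nabla\psi^0_{\d_h,\xi_h}=\int_\Omega 3\U_{\d_h,\xi_h}^2\psi^0_{\d_h,\xi_h}\,u_\e+(\text{boundary contributions})$; combining with $f(u_\e)$ produces the familiar decomposition into (i) a ``linear in $\phi_\e$'' piece, (ii) an interaction piece coming from $W^3-\sum_i\U_i^3$, (iii) the self-interaction of the $h$-th bubble with its own projection correction $-\mathfrak C\d_h H(\cdot,\xi_h)$, and (iv) the linear-$\e$ term $-\e\int_\Omega W_{\pmb\d,\pmb\xi}\psi^0_{\d_h,\xi_h}$.

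\emph{Step 1 (negligible terms).} Using Proposition \ref{fixedpoint}, i.e. $\|\phi_\e\|\le C(|\pmb\d|^2+\e|\pmb\d|)$, together with the orthogonality $\phi_\e\in\mathcal K^\perp_{\pmb\d,\pmb\xi}$ and the standard estimate $\|f'(W)\psi^0_{\d_h,\xi_h}-3\U_{\d_h,\xi_h}^2\psi^0_{\d_h,\xi_h}\|_{4/3}=o(1)$-type bounds, the contribution of $\phi_\e$ to the pairing is $o(|\pmb\d|^2)$. One must check the nonlinear remainder $\mathcal N_{\pmb\d,\pmb\xi}(\phi_\e)$ paired against $\psi^0_{\d_h,\xi_h}$ is also $o(|\pmb\d|^2)$, which follows from the quadratic estimate on $\mathcal N$ and the bound on $\|\phi_\e\|$.

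\emph{Step 2 (the main interaction and self-interaction terms).} The heart of the computation is the expansion of $\int_\Omega\big(W_{\pmb\d,\pmb\xi}^3-\sum_i\U_i^3\big)\psi^0_{\d_h,\xi_h}$ and of $\int_\Omega \U_{\d_h,\xi_h}^3\big(P\U_{\d_h,\xi_h}-\U_{\d_h,\xi_h}\big)$. For the self-interaction, plug in the expansion \eqref{exp}, $P\U_{\d_h,\xi_h}=\U_{\d_h,\xi_h}-\mathfrak C\d_h H(x,\xi_h)+\O(\d_h^3)$, rescale $x=\xi_h+\d_h y$, and use $\int_{\R^4}\U_{1,0}^3\psi^0\,dy$-type integrals together with $\int_{\R^4}\U_{1,0}^3(y)\,dy=\text{const}$; the leading order is $\mathfrak C^2\d_h^2\tau_\Omega(\xi_h)$ after identifying $H(\xi_h,\xi_h)=\tau_\Omega(\xi_h)$ and tracking the constant $\mathfrak C=8\sqrt2\pi^2$. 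For the cross terms $i\ne h$, one uses $P\U_i=\mathfrak C\d_i G(x,\xi_i)+\O(\d_i^3)$ away from $\xi_i$ (expansion \eqref{exp1}), so near $\xi_h$ the $i$-th bubble looks like $\mathfrak C\d_i G(\xi_h,\xi_i)$ to leading order; pairing $3\U_{\d_h,\xi_h}^2\cdot\mathfrak C\d_i G(\xi_h,\xi_i)$ against $\psi^0_{\d_h,\xi_h}$ and rescaling gives $-\mathfrak C^2\d_i\d_h G(\xi_i,\xi_h)$ (the sign from the structure of $W^3-\sum\U_i^3$; one expands $(\sum P\U_i)^3$ and keeps the $3\U_h^2 P\U_i$ cross terms). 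All this requires the weighted estimates collected in the Appendix.

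\emph{Step 3 (the $\e$-term).} Compute $-\e\int_\Omega W_{\pmb\d,\pmb\xi}\psi^0_{\d_h,\xi_h}$; the dominant piece is $-\e\int_\Omega P\U_{\d_h,\xi_h}\psi^0_{\d_h,\xi_h}$, and after rescaling, using $\psi^0_{\d_h,\xi_h}(x)=\d_h^{-1}\psi^0((x-\xi_h)/\d_h)$ and the explicit form $\psi^0(y)=\alpha_4\frac{|y|^2-1}{(1+|y|^2)^2}$, one finds $\int_\Omega\U_{\d_h,\xi_h}\psi^0_{\d_h,\xi_h}\sim c\,\d_h^2\ln\d_h$ because in dimension four the relevant integral $\int_{B}\U_{1,0}(y)\psi^0(y)\,dy$ over a ball of radius $\rho/\d_h$ diverges logarithmically; careful bookkeeping of the constant yields exactly $\tfrac1{4\omega}\e\d_h^2\ln\d_h$ up to the factor $\mathfrak C^2$, while cross $\e$-terms with $\U_i$, $i\ne h$, are of lower order $o(|\pmb\d|^2)$. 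The uniformity in $(\boldsymbol\xi,\mathbf d,\lambda)$ on compact sets is inherited from the uniformity in \eqref{exp}, \eqref{exp1} and the fixed-point estimate.

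\textbf{Main obstacle.} The delicate point is isolating the logarithmic term $\tfrac1{4\omega}\e\d_h^2\ln\d_h$ with the correct constant: this is precisely the $N=4$ phenomenon (absent in $N\ge5$, where the analogous term is a pure power) and it is what forces the exponential choice $\d_1=e^{-8\pi^2\lambda/\e}$ in \eqref{delta}. One must show that the integral $\int_\Omega\U_{\d_h,\xi_h}\psi^0_{\d_h,\xi_h}$ splits as (leading $\d_h^2\ln\d_h$ term) $+\,\O(\d_h^2)$, with the $\O(\d_h^2)$ error absorbed into $o(|\pmb\d|^2)$ only after it is multiplied by $\e$ — so in fact the full $\e\O(\d_h^2)$ is lower order, and one must be careful that no $\e\d_h^2$ term without a log survives at the same order as the interaction terms $\d_i\d_h G(\xi_i,\xi_h)$. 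Since by \eqref{delta} one has $\e\d_h^2|\ln\d_h|\sim\d_h^2$, the three groups of terms in \eqref{sys0} are genuinely of the same size $|\pmb\d|^2$, which is what makes the reduced system \eqref{sys2} balanced; verifying this balance and the uniform $o(|\pmb\d|^2)$ control on everything else is the technical crux.
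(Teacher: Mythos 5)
Your proposal follows essentially the same route as the paper: decompose the pairing into $(I_1)=\int(-\Delta W-W^3-\e W)\psi^0_h$, the linear-in-$\phi$ piece $(I_2)$, and the nonlinear remainder $(I_3)$; dispose of $(I_2),(I_3)$ as $o(|\pmb\d|^2)$ using the $\|\phi\|$-bound, orthogonality, and Rey's boundary estimate; and extract from $(I_1)$ the Robin term, the Green cross-terms, and the logarithmic $\e$-term via the projection expansions \eqref{exp}--\eqref{exp1} and the rescalings $\int_{\R^4}\frac{|y|^2-1}{(1+|y|^2)^4}=\frac{\omega}{12}$ and $\int_{B(0,\rho/\d_h)}\frac{|y|^2-1}{(1+|y|^2)^3}\sim-\omega\ln\d_h$. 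Your observation that the choice $\d_h\sim e^{-8\pi^2\lambda/\e}$ makes $\e\d_h^2|\ln\d_h|\sim\d_h^2$ the balancing scale is precisely what the paper exploits.
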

\begin{proof}
We point out that 
\begin{equation*}
\begin{aligned}
\int_{\Omega} \left(-\Delta u_\e -f(u_\e)-\e u_\e\right) \psi^0_{\d_h, \xi_h}\, dx & = \underbrace{\int_{\Omega} (-\Delta W_{\pmb{\d}, \pmb{\xi}} - W_{\pmb{\d}, \pmb{\xi}}^{3} - \e W_{\pmb{\d}, \pmb{\xi}})\psi^0_{\d_h, \xi_h}\, dx}_{(I_1)} \\& + \underbrace{\int_{\Omega} (-\Delta \phi - 3 \phi W_{\pmb{\d}, \pmb{\xi}}^2 - \e \phi) \psi^0_{\d_h, \xi_h}\, dx}_{(I_{2})} \\& + \underbrace{\int_{\Omega} 
\left(f(W_{\pmb{\d}, \pmb{\xi}}+\phi) -f(W_{\pmb{\d}, \pmb{\xi}}) -f'(W_{\pmb{\d}, \pmb{\xi}})\phi \right) \psi^0_{\d_h, \xi_h}\, dx}_{(I_{3})}
\end{aligned}
\end{equation*}
First of all, let us prove that 
\begin{equation}\label{I1}
(I_1)= \mathfrak{C}^{2} \delta_h^2\tau_\Omega(\xi_h)- \mathfrak{C}^{2} \sum_{i\neq h}\d_i\d_h G(\xi_i, \xi_h)+ \frac{\mathfrak{C}^{2}}{4\omega}\e \d_h^2\ln\d_h+o(|\pmb\d|^2).
\end{equation}
We observe that 
\begin{equation*}
\begin{aligned}
(I_1) & = \int_{\Omega} (\sum_{i=1}^{k} (\Ui)^{3} - (\sum_{i=1}^{k} P\Ui)^{3} - \e \sum_{i=1}^{k}P\Ui) \psi^{0}_{\d_h, \xi_h} \ dx 
\\& = \int_{\Omega} \left[ (\Uh)^{3} - (P\Uh)^{3} \right] \psi^{0}_{\d_h, \xi_h} + \sum_{i \neq h} \int_{\Omega} \left[ (\Ui)^{3} - (P\Ui)^{3} \right] \psi^{0}_{\d_h, \xi_h} 
\\& - 3 \sum_{i \neq h} \int_{\Omega} P\Ui (P\Uh)^{2} \psi^{0}_{\d_h, \xi_h}+ 3 \sum_{i \neq h}( P\Ui)^{2} P\Uh \psi^{0}_{\d_h, \xi_h} 
\\& + 6 \sum_{i,l,m=1}^{k} \int_{\Omega} P\Ui P\Ul P\UM \psi^{0}_{\d_h, \xi_h} - \e \int_{\Omega} P\Uh \psi^{0}_{\d_h, \xi_h} \\& - \e \sum_{i \neq h} \int_{\Omega} P\Ui \psi^{0}_{\d_h, \xi_h}
 \\&
= \int_{\Omega} \left[ (\Uh)^{3} - (P\Uh)^{3} \right] \psi^{0}_{\d_h, \xi_h} - 3 \sum_{i \neq h} \int_{\Omega} P\Ui (P\Uh)^{2} \psi^{0}_{\d_h, \xi_h} - \e \int_{\Omega} P\Uh \psi^{0}_{\d_h, \xi_h} + o(|\pmb\d|^2).
\end{aligned}
\end{equation*}
Now 
\begin{equation*}
\begin{aligned}
\int_{\Omega} \left[ (\Uh)^{3} -(P\Uh)^{3} \right] \psi^0_{\d_h, \xi_h}\, dx & = 6 \alpha_{4} \omega \delta_{h} \int_{\Omega}(\Uh)^2 \psi^0_{\d_h, \xi_h} H(x, \xi_{h}) + \mathcal{O}(\d_i^{3}) \\&= 6 \alpha_{4}^{4} \omega \delta_{h}^{2} H(\xi_{h}, \xi_{h}) \int_{B(0, \frac{\rho}{\d_h})} \frac{|y|^{2}-1}{(1+|y|^{2})^{4}} \ dy + \mathcal{O}(\d_i^{3}) \\& = \mathfrak{C}^{2} \delta_h^2\tau_\Omega(\xi_h) + o(|\pmb\d|^2)
\end{aligned}
\end{equation*}
and, if $i \neq h$,
\begin{equation*}
\begin{aligned}
3 \sum_{i \neq h} \int_{\Omega} P\Ui (P\Uh)^2 \psi^0_{\d_h, \xi_h}\, dx & = 6 \alpha_{4} \omega \delta_{i} \sum_{i \neq h} \int_{\Omega} (\Uh)^2 \psi^0_{\d_h, \xi_h} G(x, \xi_{i})\, dx + o(\d_i \d_h)
\\&= 6 \alpha_{4}^{4} \omega \sum_{i \neq h} \delta_{i} \delta_{h} G(\xi_{i},\xi_{h})  \int_{B(0, \frac{\rho}{\d_h})} \frac{|y|^{2}-1}{(1+|y|^{2})^{4}} \ dy + o(\d_i \d_h) 
\\& = \mathfrak{C}^2 \sum_{i\neq h}\d_i\d_h G(\xi_i, \xi_h) + o(|\pmb \d|^{2}).
\end{aligned}
\end{equation*}
We used that 
\begin{equation*}
\int_{\mathbb{R}^{4}} \frac{|y|^{2}-1}{(1+|y|^{2})^{4}} \ dy = \frac{\omega}{12}.
\end{equation*}
Furthermore
\begin{equation*}
\begin{aligned}
\e \int_{\Omega} P\Uh \psi^0_{\d_h, \xi_h} \ dx & = \e \alpha_4^2 \d_h^2 \int_{B(\xi_h, \rho)} \frac{|x-\xi_h|^{2} - \d_h^2}{(\d_h^2 + |x-\xi_h|^2)^{3}} dx + \mathcal{O}(\d_h^3) \\&
= \e \alpha_{4}^{2} \delta_{h}^{2} \int_{B(0, \frac{\rho}{\delta_{h}})} \frac{|y|^{2}-1}{(1+|y|^{2})^{3}} \ dy + \mathcal{O}(\d_h^3) \\& 
= - \e \frac{\mathfrak{C}^2}{4\omega} \d_h^{2} \ln \d_h + o(\e \d_h^{2}\ln \d_h).
\end{aligned}
\end{equation*}
Let us consider the other terms. It is important to point out the estimate
\begin{equation*}
\int_{\partial \Omega} |\partial_{\nu} \phi|^{2} = o(|\pmb \d|^{2})
\end{equation*}
proved in \cite{R}. Then, recalling that $-\Delta \psi^0_{\d_h, \xi_h} = 3(\Uh)^{2} \psi^0_{\d_h, \xi_h}$, for all $h=1, \cdots, k$, we have
\begin{equation*}
\begin{aligned}
\int_{\Omega} (-\Delta \phi) \psi^0_{\d_h, \xi_h} & = \int_{\Omega} \phi (-\Delta \psi^0_{\d_h, \xi_h}) + \int_{\partial \Omega} \underbrace{\phi}_{=0} \nabla \psi^0_{\d_h, \xi_h} \cdot \nu - \int_{\partial \Omega} \underbrace{\psi^0_{\d_h, \xi_h}}_{=\mathcal{O}(\d_h)} \nabla \phi \cdot \nu \\& = 3\int_{\Omega} \phi (\Uh)^{2} \psi^0_{\d_h, \xi_h} + o(|\pmb \d|^{2}).
\end{aligned}
\end{equation*}
Furthermore, 
\begin{equation}\label{I2}
\begin{aligned}
|(I_2)| & \leq 3 \int_{\Omega} |\phi| | (\Uh)^{2} -  W_{\pmb{\d}, \pmb{\xi}}^2| |\psi^{0}_{\d_h, \xi_h}| + \e \int_{\Omega} |\phi| | \psi^{0}_{\d_h, \xi_h}| + o(|\pmb \d|^{2})
\\& \lesssim \| \phi \|_{4} \left \|\left[ (\Uh)^{2} -  W_{\pmb{\d}, \pmb{\xi}}^2\right] \psi^{0}_{\d_h, \xi_h} \right \|_{\frac{4}{3}} + \e \|\phi\|_{4} \|\psi^{0}_{\d_h, \xi_h}\|_{\frac{4}{3}}.
\end{aligned}
\end{equation}
Moreover, we can observe that
\begin{equation*}
(\Uh)^{2} -  W_{\pmb{\d}, \pmb{\xi}}^2 = (\Uh)^{2} - (P\Uh)^{2} - \sum_{i \neq h} (P\Ui) ^{2} - 2 \sum_{i \neq h} P\Uh P\Ui - 2 \sum_{i,l \neq h} P\Ui P\Ul;
\end{equation*}
so, recalling that $P\Uh, \psi^{0}_{\d_h, \xi_h} \lesssim \d_h$ on $\Omega \setminus B(\xi_h, \rho)$, we have that
\begin{equation}\label{W}
\begin{aligned}
\left \| \left[ (\Uh)^{2} -  W_{\pmb{\d}, \pmb{\xi}}^2 \right] \psi^{0}_{\d_h, \xi_h} \right \|_{\frac{4}{3}} & \lesssim \left \| \left[ (\Uh)^{2} - (P\Uh)^{2} \right] \psi^{0}_{\d_h, \xi_h} \right \|_{\frac{4}{3}} + \sum_{i \neq h} \| (P\Ui)^{2} \psi^{0}_{\d_h, \xi_h} \|_{\frac{4}{3}} \\& + \sum_{i \neq h} \| P\Uh P\Ui \psi^{0}_{\d_h, \xi_h}\|_{\frac{4}{3}} + \sum_{i,l \neq h} \| P\Ui P\Ul \psi^{0}_{\d_h, \xi_h}\|_{\frac{4}{3}} \\& \lesssim \| \left[ (\Uh)^{2} - (P\Uh)^{2} \right] \psi^{0}_{\d_h, \xi_h}\|_{\frac{4}{3}} + \mathcal{O}(|\pmb \d|^{2}) =  \mathcal{O}(|\pmb \d|^{2}).
\end{aligned}
\end{equation}
Combining \eqref{W} and \eqref{phi} with \eqref{I2} we get $(I_2)=o(|\pmb \d|^{2})$.
Furthermore
\begin{equation}\label{I3}
|(I_3)| \lesssim \| \phi^{3} \|_{H^1_0(\Omega)} \| \psi^0_{\d_h, \xi_h} \|_{4} +  \| \phi \|_{H^1_0(\Omega)}^{2} \sum_{i=1}^{k} \| P\Ui \psi^0_{\d_h, \xi_h} \|_{2} = o(|\pmb \d|^{2}).
\end{equation}
Finally, \eqref{sys0} follows by \eqref{I1}, \eqref{I2} and \eqref{I3}.
\end{proof}
\begin{proposition}\label{espansione2}
For every $\rho>0$ small enough there exists $\eta_h>0$ such that 

\begin{equation}\label{sysj}\begin{aligned}
\int_{B(\xi_h, \eta_h)} \left(-\Delta u_\e -f(u_\e)-\e u_\e\right)\partial_l u_\e\, dx &= -\frac 12\mathfrak C^2\frac{\partial}{\partial(\xi_h)_l}\left[\d_h^2\tau_\Omega(\xi_h)-2\sum_{h\neq i}\d_i\d_hG(\xi_i,\xi_h)+\O\(|\pmb\d|^3\)\right]
\end{aligned}
\end{equation}
\end{proposition}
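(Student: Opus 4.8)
The plan is to convert the bulk integral over $B(\xi_h,\eta_h)$ into a \emph{boundary} integral over the sphere $\partial B(\xi_h,\eta_h)$ by a Pohozaev-type integration by parts, and then to exploit that, for $\eta_h<\rho$, this sphere lies in a compact subset of $\Omega$ \emph{away from every} concentration point $\xi_1,\dots,\xi_k$, where $u_\e$ is governed by its smooth, small leading term. First I would fix $\eta_h\in(0,\rho)$, so that $\overline{B(\xi_h,\eta_h)}\subset\Omega$ and $\xi_i\notin B(\xi_h,\eta_h)$ for $i\neq h$ (possible since $\boldsymbol\xi\in D_\rho$), set $F(t):=\tfrac14(t^+)^4$, so that $F'=f$, and recall that $u_\e$ is smooth in the interior of $\Omega$ by elliptic regularity. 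Using $\int_B(-\Delta u)\partial_l u=\int_{\partial B}[\tfrac12|\nabla u|^2\nu_l-\partial_\nu u\,\partial_l u]$, $\int_B f(u)\partial_l u=\int_{\partial B}F(u)\nu_l$ and $\int_B\e u\,\partial_l u=\int_{\partial B}\tfrac\e2 u^2\nu_l$, one gets, with no residual bulk term,
\begin{equation*}
\int_{B(\xi_h,\eta_h)}\big(-\Delta u_\e-f(u_\e)-\e u_\e\big)\partial_l u_\e\,dx=\int_{\partial B(\xi_h,\eta_h)}\Big[\tfrac12|\nabla u_\e|^2\nu_l-\partial_\nu u_\e\,\partial_l u_\e-F(u_\e)\nu_l-\tfrac\e2 u_\e^2\nu_l\Big]\,d\sigma,
\end{equation*}
$\nu$ denoting the outer unit normal to $\partial B(\xi_h,\eta_h)$.

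Next I would expand $u_\e$ on this shell. Writing $u_\e=P\Uh+\sum_{i\neq h}P\Ui+\phi$ and invoking \eqref{exp1} for $P\Uh$ (valid uniformly for $x$ in compact subsets of $\Omega\setminus\{\xi_h\}$) and for each $P\Ui$ with $i\neq h$ (now $B(\xi_h,\eta_h)$ is a compact subset of $\Omega\setminus\{\xi_i\}$), together with a $C^1$-bound $\|\phi\|_{C^1(\partial B(\xi_h,\eta_h))}=o(|\pmb\d|)$ — which follows from $\|\phi\|\le C(|\pmb\d|^2+\e|\pmb\d|)$ of Proposition \ref{fixedpoint}, the equation solved by $\phi$ and interior $W^{2,p}$-estimates on an annulus around $\xi_h$, in the spirit of the estimate $\int_{\partial\Omega}|\partial_\nu\phi|^2=o(|\pmb\d|^2)$ recalled from \cite{R} — one obtains, uniformly on $\partial B(\xi_h,\eta_h)$,
\begin{equation*}
u_\e=\mathfrak C\,v+o(|\pmb\d|),\qquad \nabla u_\e=\mathfrak C\,\nabla v+o(|\pmb\d|),\qquad v:=\sum_{i=1}^k\d_i\,G(\cdot,\xi_i).
\end{equation*}
In the boundary integral, $F(u_\e)=\mathcal O(|\pmb\d|^4)$ and $\tfrac\e2 u_\e^2=\mathcal O(\e|\pmb\d|^2)$ are both $o(|\pmb\d|^2)$; it is worth stressing that, unlike the $\e$-contribution in Proposition \ref{espansione1} (which carries a logarithmic factor from the bubble core and therefore survives at order $|\pmb\d|^2$), no such enhancement occurs here precisely because the integration is performed on a fixed shell, away from concentration. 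The two quadratic terms then reduce, modulo $o(|\pmb\d|^2)$ and uniformly on compact sets of $D_\rho\times(0,+\infty)^{k-1}\times(0,+\infty)$ as $\e\to0$, to $\mathfrak C^2$ times the corresponding quadratic form in $v$:
\begin{equation*}
\int_{B(\xi_h,\eta_h)}\big(-\Delta u_\e-f(u_\e)-\e u_\e\big)\partial_l u_\e\,dx=\mathfrak C^2\int_{\partial B(\xi_h,\eta_h)}\Big[\tfrac12|\nabla v|^2\nu_l-\partial_\nu v\,\partial_l v\Big]\,d\sigma+o(|\pmb\d|^2).
\end{equation*}

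To conclude I would invoke the classical Pohozaev boundary identity for the Green's function (cf. \cite{R}, and \cite{BLR}). On $B(\xi_h,\eta_h)\setminus\{\xi_h\}$ the function $v$ is harmonic — each $G(\cdot,\xi_i)$ with $i\neq h$ is harmonic there, and $G(\cdot,\xi_h)$ has its only singularity at the center, of the form (fundamental solution) minus the regular part $H(\cdot,\xi_h)$ — so the energy–momentum tensor $T_{lm}(v)=\partial_l v\,\partial_m v-\tfrac12\delta_{lm}|\nabla v|^2$ is divergence-free there, whence the boundary integral above is independent of $\eta_h$. Decomposing $v=\d_h\,\Gamma(\cdot-\xi_h)+w$ with $\Gamma$ the fundamental solution and $w:=\sum_{i\neq h}\d_i G(\cdot,\xi_i)-\d_h H(\cdot,\xi_h)$ harmonic on $B(\xi_h,\eta_h)$, letting $\eta_h\to0$, and noting that the pure singular part contributes $0$ by radial symmetry while the pure regular part contributes $0$ in the limit, only the singular–regular cross term survives; the resulting residue computation, together with the symmetry of $G$ and $H$ (used to identify $\partial_{x_l}$-derivatives at $\xi_h$ with $\partial_{(\xi_h)_l}$-derivatives), gives
\begin{equation*}
\int_{\partial B(\xi_h,\eta_h)}\Big[\tfrac12|\nabla v|^2\nu_l-\partial_\nu v\,\partial_l v\Big]\,d\sigma=-\tfrac12\,\frac{\partial}{\partial(\xi_h)_l}\Big[\d_h^2\tau_\Omega(\xi_h)-2\sum_{i\neq h}\d_i\d_h\,G(\xi_i,\xi_h)\Big]
\end{equation*}
(the terms $\d_i\d_{i'}G(\xi_i,\xi_{i'})$ with $i,i'\neq h$ do not appear). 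Combining the last three displays yields \eqref{sysj}, the $o(|\pmb\d|^2)$ remainder being absorbed into $-\tfrac12\mathfrak C^2\frac{\partial}{\partial(\xi_h)_l}\mathcal O(|\pmb\d|^3)$ thanks to the $C^1$-dependence on $\xi_h$ of all quantities involved. I expect the genuine difficulty to be the second step: the uniform $C^1$-smallness of $\phi$ off the blow-up points and, above all, the bookkeeping needed to verify that \emph{every} discarded interaction on the shell — the bubble–bubble products $P\Ui P\Uj$, the products involving $\phi$, and the $F(u_\e)$ and $\e u_\e^2$ contributions — is genuinely $o(|\pmb\d|^2)$, uniformly in $(\boldsymbol\xi,\mathbf{d},\lambda)$ on compacts.
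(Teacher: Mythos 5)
Your proof is correct and follows the same route as the paper's: a Pohozaev‑type integration by parts over $B(\xi_h,\eta_h)$ reducing everything to a boundary integral, an expansion of $u_\e$ on the shell by the Green‑function profile $\mathfrak C\sum_i\d_i G(\cdot,\xi_i)$ with the singular part $\d_h\Gamma(\cdot-\xi_h)$ split from the harmonic remainder $\Theta_{i,h}$, vanishing of the pure‑singular and pure‑harmonic contributions, and evaluation of the surviving singular–regular cross term by harmonicity (your independence‑of‑$\eta_h$ plus limiting‑residue argument and the paper's mean‑value computation are the same calculation). One small point in your favor: you write the boundary tensor $\tfrac12|\nabla u_\e|^2\nu_l-\partial_\nu u_\e\,\partial_l u_\e$ correctly, whereas the paper's formula \eqref{K1} abbreviates it to $-\tfrac12(\partial_\nu u_\e)^2\nu_l$ (a simplification valid only where $u_\e$ vanishes, so not on $\partial B(\xi_h,\eta_h)$), though the paper's subsequent cross‑term computation tacitly reverts to the full tensor — using $\partial_{x_l}\Theta_{i,h}$ rather than $\partial_\nu\Theta_{i,h}\,\nu_l$ — and therefore lands on the same answer as yours.
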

\begin{proof}
First of all, we point out that
\begin{equation}\label{K1}
\int_{B(\xi_h, \eta_h)} \left(-\Delta u_\e -f(u_\e)-\e u_\e\right)\partial_l u_\e\, dx = \int_{\partial B(\xi_h, \eta_h)} \left( -\frac{1}{2} (\partial_{\nu} u_{\e})^{2} \cdot \nu_l - \frac{1}{4} u_\e^{4} \nu_{l} - \frac{1}{2} \e u_\e^{2} \nu_l \right) \ dx.
\end{equation} 
Indeed, in general, 
\begin{equation}\label{deriv}
\begin{aligned}
\int_{\Omega} - \Delta u_\e \cdot \partial_l u_\e & = - \sum_{i=1}^{N} \int_{\Omega}  \frac{\partial^2 u_\e}{\partial y_i^2} \frac{\partial u_\e}{\partial y_l} = \frac 12 \sum_{i=1}^{N} \int_{\Omega} \frac{\partial}{\partial y_l} \left( \frac{\partial u_\e}{\partial y_i} \right)^2 - \sum_{i=1}^{N} \int_{\partial \Omega} \frac{\partial u_\e}{\partial y_l} \frac{\partial u_\e}{\partial y_i} \cdot \nu_i
\\& = \frac 12 \sum_{i=1}^{N} \int_{\partial\Omega} \left( \frac{\partial u_\e}{\partial y_i} \right)^2 \cdot \nu_l - \int_{\partial \Omega} \frac{\partial u_\e}{\partial y_l} \frac{\partial u_\e}{\partial \nu} = -\frac 12 \int_{\partial \Omega} \left( \frac{\partial u_\e}{\partial \nu} \right)^2 \cdot \nu_l.
\end{aligned}
\end{equation}
Then, using \eqref{deriv} and integrating by parts we have \eqref{K1}. \ \\
Moreover, by co-area formula and \eqref{phi}, we choose that 
\begin{equation}\label{K2}
\int_{\partial B(\xi_h, \eta_h)} (|\nabla \phi|^{2} + |\phi|^{4} + \e|\phi|^{2}) \lesssim |\pmb\d|^4.
\end{equation}
Now, by \eqref{soluzione}
\begin{equation}\label{K3}
W_{\pmb{\d}, \pmb{\xi}} := \frac{\alpha_4 \d_h}{|x-\xi_h|^{2}} + 2\alpha_4 \omega \underbrace{\left[-\d_h H(x, \xi_h) + \sum_{h \neq i} \d_i G(x, \xi_i) \right]}_{\Theta_{i,h}(x)} + \mathcal{O}(\d_h^{3})
\end{equation}
$C^{1}$-uniformly on $\partial B(\xi_h, \eta_h)$. It is crucial to point out that the function $\Theta_{i,h}$ is harmonic on the ball $B(\xi_h, \eta_h)$. Therefore, by \eqref{K1}, \eqref{K2} and \eqref{K3}
\begin{equation*}
\begin{aligned}
& \int_{B(\xi_h, \eta_h)} \left(-\Delta u_\e -f(u_\e)-\e u_\e\right)\partial_l u\, dx = - \int_{\partial B(\xi_h, \eta_h)} \frac{1}{2} (\partial_{\nu} u_{\e})^{2} \cdot \nu_l + o(|\pmb\d|^2) 
\\& = - 2\alpha_4^2 \omega^2 \underbrace{\int_{\partial B(\xi_h, \eta_h)} (\partial_{\nu} \Theta_{i,h}(x))^{2} \cdot \nu_l}_{(=\int_{B(\xi_h, \eta_h)} \Delta \Theta_{i,h}(x) \partial_{l} \Theta_{i,h}(x) = 0)} - \alpha_{4}^{2} \d_h^{2} \underbrace{\int_{\partial B(\xi_h, \eta_h)} \partial_{\nu} \frac{1}{|x-\xi_h|^{2}} \cdot \nu_l}_{(=\int_{B(\xi_h, \eta_h)} \Delta  \frac{1}{|x-\xi_h|^{2}} \partial_{l}  \frac{1}{|x-\xi_h|^{2}} =0)} 
\\& - 2\alpha_4 \omega  \int_{\partial B(\xi_h, \eta_h)} \nabla \left( \frac{\alpha_4 \d_h}{|x-\xi_h|^{2}} \right) \cdot \nu \ \partial_{x_l} \Theta_{i,h}(x) + o(|\pmb\d|^2) 
\\& = 4 \alpha_4^2 \omega \d_h \frac{1}{|\eta_h|^{3}} \int_{\partial B(\xi_h, \eta_h)} \partial_{x_l} \Theta_{i,h}(x) +  o(|\pmb\d|^2) = 4 \alpha^2_4 \omega^2 \d_h \partial_{l}  \Theta_{i,h}(\xi_h) + o(|\pmb\d|^2) 
\end{aligned}
\end{equation*} 
because $\Theta_{i,h}$ is harmonic on the ball $B(\xi_h, \eta_h)$ and from the mean value theorem
\begin{equation*}
\frac{1}{|\partial B(\xi_h, \eta_h)|}  \int_{\partial B(\xi_h, \eta_h)} \partial_{x_l} \Theta_{i,h}(x) = \partial_{l} \Theta_{i,h}(\xi_h) \ \ \mbox{with} \ \ |\partial B(\xi_h, \eta_h)| = \omega |\eta_h|^3.
\end{equation*}
Finally, as $\tau_\Omega(x)=H(x,x)$ denotes the Robin's function, by
\begin{equation*}
\partial_{(\xi_h)_{l}} \tau_\Omega(\xi_h) = (\partial_{x_l} H(x,y) + \partial_{y_l} H(x,y))|_{(x,y)=(\xi_h, \xi_h)} = 2\partial_{x_l} H(x,y)|_{(x,y)=(\xi_h, \xi_h)}
\end{equation*}
the claim follows.
\end{proof} 



\begin{thebibliography}{99}
\bibitem{AGGPV}
 A. L. Amadori, F. Gladiali, M. Grossi, A. Pistoia, G. Vaira, A complete scenario on
nodal radial solutions to the Brezis - Nirenberg problem in low dimensions,{\it Nonlinearity},
{\bf 34} (11), 8055--8093 (2021).

\bibitem{A}
T. Aubin, Espaces de Sobolev sur les vari\'et\'es riemanniennes, {\it Bull. Sci. Math.,}{\bf 2}, 149--173 (1976) .

\bibitem{ABP}
F. V. Atkinson, H. Brezis, L. A. Peletier, Nodal solutions of elliptic equations with critical Sobolev exponents, {\it J. Diff. Eqs.,} {\bf 85}, 151--170 (1990).

\bibitem{AP}
F. V. Atkinson, L. A. Peletier, Large solutions of elliptic equations involving critical exponents, {\it Asymptotic Analysis,} {\bf 1}, 139--160 (1988).

\bibitem{ABR}
S. Axler, P. Bourdon, W. Ramey, Harmonic function theory {\it Springer-Verlag New York, Inc.Second Edition}, (2000).

\bibitem{BC}
 A. Bahri and J.-M. Coron. On a nonlinear elliptic equation involving the critical Sobolev exponent:
the eﬀect of the topology of the domain. {\it Comm. Pure Appl. Math.}, {\bf 41} (3), 253--294 (1988).

\bibitem{BLR}
A. Bahri, Y. Li, and O. Rey, On a variational problem with lack of compactness: The topolog-
ical effect of the critical points at infinity. {\it Calc. Var. Partial Differential Equations} {\bf 3},
no. 1, 67--93 (1995).

\bibitem{BN}
H. Brezis, L. Nirenberg, Positive solutions of nonlinear elliptic equations involving critical Sobolev exponents, {\it Comm. Pure Appl. Math.,} {\bf 36}, no. 4, 437--477 (1983).


\bibitem{BP}
H. Brezis, L. Peletier, Asymptotics for elliptic equations involving critical growth, pp. 149--192 in Partial Differential Equations and the Calculus of Variations, {\em Progress in Nonlinear Differential Equations Applications} {\bf 1}, Birkh\"auser, Boston, 1989.

\bibitem{CGS}
L. A. Caffarelli, B. Gidas, J. Spruck, Asymptotic symmetry and local behavior of semilinear
elliptic equations with critical Sobolev growth,{\it Comm. Pure Appl. Math.,} {\bf 42}  no.3, 271--297 (1989).

\bibitem{dfm2} M. del Pino, P. Felmer, M. Musso,   Multi-bubble solutions for slightly super-critical elliptic problems in domains with symmetries. {\it  Bull. London Math. Soc.}{\bf  35}  no. 4, 513–521 (2003).

\bibitem{D}
O. Druet, Elliptic equations with critical Sobolev exponents in dimension 3,{ \it Ann. Inst. H.
Poincar\'e C Anal. Non Lin\'eaire,} {\bf 19} (2) 125--142 (2002).

\bibitem{EGPV}
P. Esposito, N. Ghoussoub, A. Pistoia, and G. Vaira. Sign-changing solutions for critical
equations with Hardy potential, {\it  Anal. PDE}, {\bf 14} 2, 533--566 (2021).

\bibitem{GV}
M. Grossi, D. Vujadinović. On the Green function of the annulus, 
{\it  Anal. Theory Appl.}, {\bf 32},1, 52--64. (2016).

\bibitem{H}
Z.-C. Han. Asymptotic approach to singular solutions for nonlinear elliptic equations involving critical Sobolev exponent,{\it Ann. Inst. H. Poincar\'e C Anal. Non Lin\'eaire}, {\bf 8}, 2 159--174
(1991).

\bibitem{KL1}
T. K\"onig, P. Laurain. Fine multibubble analysis in the higher-dimensional Brezis-
Nirenberg problem, {\it Ann. Inst. H. Poincar\'e C Anal. Non Lin\'eaire},{\bf 41} , no. 5, 1239--1287 (2024).

\bibitem{KL2}
T. K\"onig, P. Laurain, Multibubble blow-up analysis for the Brezis-Nirenberg problem in
three dimensions, ArXiv 2208.12337 (2022)

\bibitem{IV1}
A. Iacopetti, G. Vaira, 
Sign-changing tower of bubbles for the Brezis-Nirenberg problem,{\it
Commun. Contemp. Math.}, {\bf 18} , no. 1, 1550036, 53 pp. (2016).

\bibitem{IV2}
A. Iacopetti, G. Vaira, Sign-changing blowing-up solutions for the Brezis-Nirenberg problem in dimension four and five, {\it Ann. Sc. Norm. Pisa Cl. Sci.,} {\bf 18},  1--38 (2018).

\bibitem{yyl}Li, Yan Yan  
On a singularly perturbed equation with Neumann boundary condition.
Comm. Partial Differential Equations 23 (1998), no. 3-4, 487–545.

\bibitem{LVWW}
F. Liu, G. Vaira, J. Wei, Y. Wu, Construction of bubbling solutions of the Brezis-Nirenberg problem in general bounded domains (I): the dimensions $4$ and $5$, preprint arXiv:2503.09250 (2025).

\bibitem{MPV}
F. Morabito, A. Pistoia, and G. Vaira, Towering phenomena for the Yamabe equation on
symmetric manifolds. {\it Potential Anal.}, {\bf 47} (1) 53--102,( 2017).
	
\bibitem{MP}
M. Musso, A. Pistoia, Multispike solutions for a nonlinear elliptic problem involving the critical Sobolev exponent, {\it Indiana Univ. Math. J.}{\bf 51} , no. 3, 541--579 (2002).

\bibitem{MS}
M. Musso, D. Salazar, 
Multispike solutions for the Brezis-Nirenberg problem in dimension three
{\it  J. Differential Equations} {\bf 264}  no. 11, 6663--6709 (2018).

\bibitem{prey} A. Pistoia, O. Rey, Multiplicity of solutions to the supercritical Bahri-Coron's problem in pierced domains. {\it Adv. Differential Equations} {\bf 11}  no. 6, 647–666 (2006).

\bibitem{PR}
A. Pistoia, S. Rocci, The Brezis-Nirenberg problem in 4D, {\it Discrete Contin. Dyn. Syst. Ser. S}{\bf 17}  no. 4, 1562--1572 (2024).

\bibitem{PV}
A. Pistoia, G. Vaira, Nodal solutions of the Brezis-Nirenberg problem in dimension 6, {\it Anal. Theory Appl.,} {\bf 38} (2022), no. 1, 1--25.

\bibitem{PV1}
A. Pistoia and G. Vaira, Clustering phenomena for linear perturbation of the Yamabe
equation. In Partial diﬀerential equations arising from physics and geometry, volume 450 of
London Math. Soc. Lecture Note Ser., pages 311–331. Cambridge Univ. Press, Cambridge,
2019.

\bibitem{P}
 B. Premoselli. Towers of bubbles for Yamabe-type equations and for the Brezis-Nirenberg
problem in dimensions $n \geq 7$,{\it J. Geom. Anal.}, {\bf32} (3):Paper No. 73, 65, (2022).

\bibitem{R}
O. Rey, The role of the Green’s function in a nonlinear elliptic equation involving the critical Sobolev exponent, {\it J. Funct. Anal.,} {\bf 89}, no. 1, 1--52  (1990).

\bibitem{SW}
E. Stein, G. Weiss, Introduction to Fourier Analysis on Euclidean Spaces {\it Princeton University Press}, (1971).

\bibitem{T}
G. Talenti, Best constant in Sobolev inequality,{\it Ann. Mat. Pura Appl., } {\bf(4)} 353--372  (1976).
\end{thebibliography}
\end{document}